\documentclass[twoside,english,review]{elsarticle}
\usepackage[T1]{fontenc}
\usepackage[latin9]{inputenc}
\pagestyle{headings}
\usepackage{verbatim}
\usepackage{units}
\usepackage{amsmath}
\usepackage{amsthm}
\usepackage{amssymb}
\usepackage{esint}
\usepackage{nomencl}

\providecommand{\makenomenclature}{\makeglossary}
\makenomenclature

\makeatletter
\theoremstyle{plain}
\newtheorem{thm}{\protect\theoremname}
\theoremstyle{plain}

\ifx\proof\undefined
\newenvironment{proof}[1][\protect\proofname]{\par
\normalfont\topsep6\p@\@plus6\p@\relax
\trivlist
\itemindent\parindent
\item[\hskip\labelsep\scshape #1]\ignorespaces
}{%
\endtrivlist\@endpefalse
}
\providecommand{\proofname}{Proof}
\fi
\theoremstyle{remark}
\newtheorem{rem}[thm]{\protect\remarkname}
\theoremstyle{plain}
\newtheorem{cor}[thm]{\protect\corollaryname}
\theoremstyle{definition}
\newtheorem{defn}[thm]{\protect\definitionname}
\theoremstyle{plain}
\newtheorem{prop}[thm]{\protect\propositionname}


\usepackage{caption}

\captionsetup[table]{skip=8pt}



\makeatletter
\def\ps@pprintTitle{%
  \let\@oddhead\@empty
  \let\@evenhead\@empty
  \let\@oddfoot\@empty
  \let\@evenfoot\@oddfoot
}
\makeatother

\makeatother

\usepackage{babel}
\providecommand{\corollaryname}{Corollary}
\providecommand{\definitionname}{Definition}
\providecommand{\lemmaname}{Lemma}
\providecommand{\propositionname}{Proposition}
\providecommand{\remarkname}{Remark}
\providecommand{\theoremname}{Theorem}


\usepackage{xspace}
\usepackage{xcolor}
\usepackage{mathtools}


\newcommand{\ie}{\unskip, i.\,e.,\xspace}
\newcommand{\eg}{\unskip, e.\,g.,\xspace}





\newcommand\blfootnote[1]{%
	\begingroup
	\renewcommand\thefootnote{}\footnote{#1}%
	\addtocounter{footnote}{-1}%
	\endgroup
}


\begin{document}

\begin{frontmatter}{}

\title{Analysis of extremum value theorems for function spaces in optimal control under numerical uncertainty}

\author[TUC]{Pavel~Osinenko}


\author[TUC]{Stefan~Streif}


\address[TUC]{Automatic Control and System Dynamics Lab;
Technische Universit\"at Chemnitz, 09107 Chemnitz, Germany}
\begin{abstract}
The extremum value theorem for function spaces plays the central role in optimal control. It is known that computation of optimal control actions and policies is often prone to numerical errors which may be related to computability issues. The current work addresses a version of the extremum value theorem for function spaces under explicit consideration of numerical uncertainties. It is shown that certain function spaces are bounded in a suitable sense \ie they admit finite approximations up to an arbitrary precision. The proof of this fact is constructive in the sense that it explicitly builds the approximating functions. Consequently, existence of approximate extremal functions is shown. Applicability of the theorem is investigated for finite--horizon optimal control, dynamic programming and adaptive dynamic programming. Some possible computability issues of the extremum value theorem in optimal control are shown on counterexamples.\end{abstract}
\begin{keyword}
Extremum, Approximate, Optimal Control, Dynamic Programming

\end{keyword}

\end{frontmatter}{}

\blfootnote{This is an author's original version of an article accepted for publication in IMA Journal of Mathematical Control and Information. The version of record is available online at: \texttt{dx.doi.org/10.1093/imamci/dny018}}

\section{Introduction\label{sec:intro}}

Optimal control represents an important part of control theory. Typically, one seeks for an optimal function over a state space (also called \emph{control policy}) so as to minimize a given cost functional. It is, however, not in general possible to compute optimizing control policies exactly due to limitations of numerical procedures which may have certain effects on the system behavior. The current work shows how, under some mild and practicable assumptions, approximate optimal control policies can still be explicitly computed. The proofs are done constructively \ie they entail certain ways of computing the objects in question. Constructive results are not unusual in control engineering and are often desired: for instance, \citet*{Banaschewski1997-constr-Stone-Weier} gave a constructive proof of the Stone-Weierstrass theorem, which is used in a number of applications to effectively find approximations to specific functions in a suitable basis. The famous Sontag's formula \citep{Sontag1989-formula} is the core of the Sontag's constructive proof of the Artstein's theorem on nonlinear stabilization. \citet*{Sepulchre2012-constr-nonlin-ctrl} developed this methodology further into a vast variety of constructive methods of finding specific stabilizing controllers.

Going back to the problem of optimal control and related effects which may occur due to numerical uncertainty, consider the following simple example of a discrete-time system whose dynamical behavior is switched by a binary decision variable $u$:
\begin{align}
	x_{k+1}= \begin{cases}
		(\frac{1}{2} + b)x_k, &u_k = 1, \\
		(\frac{1}{2} + c)x_k, &u_k = -1,
	\end{cases} \quad x_0 = 1, u_k \in \{1, -1\},
	\label{eq:exm-switched-sys}
\end{align}
where $b, c$ are real numbers which may \eg represent some physical quantities. Let an infinite-horizon cost function be defined as:
\[
\min_{\{u_k\}_k} \quad J = \sum_{k=0}^{\infty} x_k^2.
\]

Suppose, for the sake of the example, that one of the numbers $b, c$ is zero and the other one is positive. Then, by the virtue of the system dynamics \eqref{eq:exm-switched-sys}, the optimal control policy is $u^*=\{1,1,1, \dots \}$ and the corresponding optimal state sequence is $x^*=\{ 1,\frac{1}{2},\frac{1}{4}, \frac{1}{8}, \dots \}$. In this case, the optimal cost is $J^*=2$. Thus, if one could find the optimal control policy, i. e., the optimal control action at each time step, then either $J^*=\frac{2}{1-2c}=2$ or $J^*=\frac{2}{1-2b}=2$ by the geometric sum and hence either $b=0$ or alternatively $c=0$. However, in practice, there may occur a numerical uncertainty between the exact values of $b, c$ and their representations in a computational device, usually as rational numbers. One of the possible simple ways to consider these representations of $b, c$ is in the form of Cauchy sequences $\{b(n)\}_n, \{c(n)\}_n$ which are regular in the following sense:
\begin{align*}
	& \forall n,m \in \mathbb{N} \\ 
	& |b(n)-b(m)| \le \tfrac{1}{n} + \tfrac{1}{m}, \\
	& |c(n)-c(m)| \le \tfrac{1}{n} + \tfrac{1}{m}.
\end{align*}

In practice, the system \eqref{eq:exm-switched-sys} may contain some particular approximations $b(n'), c(n'), n' \in \mathbb N$ where $n'$ is the precision of the computational device.In the current work, all the proofs are done by working directly with the representations of real numbers which helps address numerical uncertainty. The said rational approximations may well come from \eg a measurement, which always has a finite precision, or from some computational algorithm, such as model identification. Therefore, to computationally check whether $b=0$ or alternatively $c=0$, approximations $b(n), c(n)$ for all $n \in \mathbb N$ must be compared. Such an unbounded search is, however, not technically possible. Therefore, different optimal control policies might result depending on precision -- in this case, a particular number $n'$. 

The same issue may appear when minimizing \eg the following particular cost function:
\[
J(u) = \min \{ u^2 + b , (u-1)^2+c \}.
\]

By the virtue of the numbers $b$ and $c$ as described above, it follows that $\min J = 0$. However, if an optimal control action $u^*$ could be computed exactly, such that $J(u^*)= \min J$, then either $u^* = 0$ or $u^* = 1$. It would be equivalent to deciding whether $b$ or $c$ is exactly zero which is not always technically possible. This has been typically demonstrated in simple counter-examples \citep{Bishop1967-constr-analys}, whose more detailed description may be found in Appendix. Particular examples of peculiar phenomena related to numerical uncertainty and floating-point arithmetic may also be found in \citep{Rump2010-float-pt-arithm}.

%
As shown in the example above, optimality in general may fail to be achieved depending on the representation of system parameters. To address these issues, the present work seeks to show existence of optimal control in an approximate format by explicitly considering numerical uncertainty. The proofs are done constructively and in the setting of \citep{Bishop1985-constr-analysis} since it offers convenient tools for keeping track of the number representations. The details are given in the next section.

It should be noted that, classically, the extremum value theorem states the following:

\begin{thm}
	If a function $f$ is continuous on a compact interval $[a,b]$, then there exist $x,y\in[a,b]$ such that $f(x)=\sup f$ and $f(y)=\inf f$.
	\label{thm:EVT}
\end{thm}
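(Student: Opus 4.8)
The plan is to proceed in two stages: first establish that $f$ is bounded on $[a,b]$, and then show that the resulting supremum and infimum are actually attained. Both stages rest on the compactness of $[a,b]$, which I would exploit through the Bolzano--Weierstrass theorem (sequential compactness), together with the least-upper-bound completeness of \R.

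For boundedness I would argue by contradiction. Suppose $f$ is not bounded above; then for each $n \in \N$ there is a point $x_n \in [a,b]$ with $f(x_n) > n$. Since $[a,b]$ is bounded, Bolzano--Weierstrass yields a subsequence $x_{n_k}$ converging to some limit $c$, and since the interval is closed we have $c \in [a,b]$. Continuity of $f$ at $c$ forces $f(x_{n_k}) \to f(c)$, a finite value, contradicting $f(x_{n_k}) > n_k \to \infty$. Hence $f$ is bounded above, and by the same argument applied to $-f$ it is bounded below.

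Given boundedness, the image $f([a,b])$ has a finite supremum $M$ by completeness of \R. To show $M$ is attained I would pick, for each $n$, a point $x_n \in [a,b]$ with $M - \tfrac{1}{n} < f(x_n) \le M$, so that $f(x_n) \to M$. Extracting a convergent subsequence $x_{n_k} \to x \in [a,b]$ as before and invoking continuity gives $f(x) = \lim_k f(x_{n_k}) = M = \sup f$. The infimum case is entirely symmetric: applying the same reasoning to $-f$, whose supremum equals $-\inf f$, produces the point $y$ with $f(y) = \inf f$.

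The genuinely delicate step is the passage from an approximating sequence $x_n$ to an \emph{exact} maximizer $x$: it relies on extracting a convergent subsequence and then identifying the limit point at which the bound is met precisely. In the classical setting this is immediate, but --- as the counterexamples in the Introduction already hint --- it is exactly this non-constructive existence step that becomes problematic once reals are handled through their Cauchy representations, and which the approximate version developed below is meant to replace.
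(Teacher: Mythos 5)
Your proof is the standard classical argument and it is correct; note, however, that the paper itself gives no proof of this theorem --- it is stated purely as classical background, precisely because its proof is non-constructive. Your closing observation is the important one: the Bolzano--Weierstrass subsequence extraction (used twice, for boundedness and for attainment) is exactly the step that fails constructively, which is why the paper replaces exact attainment with the approximate statement of Theorem \ref{thm:aEVT-fnc}, where total boundedness of the domain (or of the function space) plus uniform continuity yields a point within $\frac{1}{k}$ of the infimum by a finite search over a $\frac{1}{k}$-approximation rather than by a limit of a subsequence.
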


Some constructive approaches to Theorem \ref{thm:EVT} were addressed \eg in \citep{Berger2006-EVT,Bridges2007-EVT} with additional assumptions on the function $f$. These assumptions, are, however, not always easy to verify practically, especially when one wants to apply the theorem to function spaces. Instead of strengthening the conditions of the theorem, an approximate format is considered in the present work which is sufficient for practical applications of optimal control. To achieve this, it is shown, that certain function spaces admit finite approximations. The proof is based on constructing finite approximations explicitly. 

Another consequence of the new results shows also what at best can be achieved in general when addressing optimal control. The major implication is a theoretical limit at which any numerical algorithm may perform. Whereas exact optimal control policies are not achievable in general, the new result demonstrates principal possibility of computing approximate optimal control policies up to prescribed accuracy provided that the optimization problem satisfies certain conditions which are, as will be shown in the case study of Section \ref{sec:case-study}, practicable. The next section discusses the important preliminaries needed to prove the main theorem of Section \ref{sec:main-result}.


\section{Preliminaries\label{sec:prelim}}

In this section, the definitions and some basic technical results necessary for derivation of the new approximate extremum value theorem for function spaces are recalled. For a comprehensive description, refer,
for example, to \citet{Bishop1985-constr-analysis,Bridges1987-varieties,Bridges2007-techniques,Ye2011-SF,Schwichtenberg2012-constr-analys}. A \textbf{real} number $x$ in the current work well be characterized by its rational approximations in the following \emph{regular} Cauchy sequence format: 
\[
\forall n,m\in\mathbb{N},|x(n)-x(m)|\leq\frac{1}{n}+\frac{1}{m},
\]
where $x(n)$ is some operation that produces the $n-$th rational approximation to $x$. The inequalities on real numbers are defined as follows:
\begin{align*}
	(x\le y)\triangleq & \forall n\in\mathbb{N},x(n)\le y(n)+\frac{2}{n},\\
	(x<y)\triangleq & \exists n\in\mathbb{N},x(n)<y(n)-\frac{2}{n}.
\end{align*}

In the second definition, the number $n$ is also called a \emph{witness}. Such objects are said to certify the respective formulas. They can be used by computational devices. Further, the maximum of two real numbers is defined as follows: $\max\left\{ x,y\right\} (n)\triangleq\max\left\{ x(n),y(n)\right\}$. The basic properties of it can be proven, but in general it cannot be decided whether $\max\left\{ x,y\right\} =x$ or $\max\left\{ x,y\right\} =y$. However, the following simple technical result can be easily proven:

\begin{prop}
	For any two real numbers $x,y$ satisfying $x\le y$, it follows that $\max\left\{ x,y\right\} = y$.
	\label{lem:max-2-reals}
\end{prop}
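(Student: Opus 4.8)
The plan is to unfold the constructive definition of equality of real numbers and verify it on the level of the rational approximations. Recall that in this framework two reals coincide precisely when each is $\le$ the other, so writing $z\triangleq\max\{x,y\}$ it suffices to establish both $z\le y$ and $y\le z$. The central point to keep in mind is that, by definition, $z(n)=\max\{x(n),y(n)\}$ where $x(n),y(n)$ are \emph{rational} numbers, and the maximum of two rationals is computed directly since the order on the rationals is decidable; no decision about the reals themselves enters at this stage.

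For the direction $y\le z$ I would simply observe that $z(n)=\max\{x(n),y(n)\}\ge y(n)$ for every $n$, whence $y(n)\le z(n)\le z(n)+\frac{2}{n}$, which is exactly the defining condition for $y\le z$. For the reverse direction $z\le y$ I would invoke the hypothesis $x\le y$, which by definition yields $x(n)\le y(n)+\frac{2}{n}$ for all $n$. Combining this with the trivial bound $y(n)\le y(n)+\frac{2}{n}$ and using that $\max\{a,b\}\le c$ whenever $a\le c$ and $b\le c$, I obtain $z(n)=\max\{x(n),y(n)\}\le y(n)+\frac{2}{n}$ for every $n$, which certifies $z\le y$. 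Together the two inequalities give $z=y$, as claimed.

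I do not expect a genuine obstacle here; the only point that requires care is to resist arguing by cases on whether $\max\{x,y\}$ equals $x$ or equals $y$ \emph{as reals}, since that is precisely the decision the framework declares unavailable. The proof sidesteps this entirely by bounding the rational maximum $z(n)$ uniformly against $y(n)+\frac{2}{n}$, rather than attempting to identify which argument attains it. Reasoning at the level of the decidable rational approximations is exactly what makes the argument constructive and effective.
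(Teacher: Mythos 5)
Your proof is correct and follows essentially the same route as the paper: both reduce the claim to the two rational inequalities $\max\{x(n),y(n)\}\le y(n)+\frac{2}{n}$ and $\max\{x(n),y(n)\}\ge y(n)-\frac{2}{n}$, the first from the hypothesis $x\le y$ and the second from $\max\{x(n),y(n)\}\ge y(n)$. Phrasing the target as the conjunction $z\le y\land y\le z$ rather than directly as $|z(n)-y(n)|\le\frac{2}{n}$ is only a cosmetic difference, and your remark about avoiding a case split on which real attains the maximum is exactly the right point.
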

\begin{proof}	
	It suffices to show that 
	\[
	\forall n,|\max\left\{ x(n),y(n)\right\} -y(n)|\leq\frac{2}{n}.
	\]
	
	It follows that $\forall n\in\mathbb{N},\max\left\{ x(n),y(n)\right\} \le y(n)+\frac{2}{n}$
	from the condition of the proposition which implies $\forall n\in\mathbb{N}.x(n)\le y(n)+\frac{2}{n}$.
	On the other hand, $\forall n\in\mathbb{N},\max\left\{ x(n),y(n)\right\} \ge y(n)\ge y(n)-\frac{2}{n}$,
	and the result follows.
\end{proof}

\begin{rem}
	Properties of the minimum are derived similarly. 
\end{rem}

A metric space $\left(X,\rho\right)$ is a set $X$ together with an operation $\rho:X\times X\rightarrow\mathbb{R}$ that satisfies the usual axioms of a metric. A metric space $\left(X,\rho\right)$ is \textbf{totally bounded} if for all natural $k$, there exists a finite set of unequal points $\left\{ x_{1}, \dots, x_{n}\right\} \subset X$ such that for any $x\in X$, there exists an $x_{i}\in\left\{ x_{1}, \dots, ,x_{n}\right\}$ with $\rho\left(x,x_{i}\right)\leq\frac{1}{k}$. 
Such a finite set is also called a $\frac{1}{k}-$\textbf{approximation} to $X$. A subset $A$ of a metric space $\left(X,\rho\right)$ is \textbf{located} if it is non-empty and for any $x$ in $X$, the metric $\rho\left(x,A\right)\triangleq\inf\left\{ \rho(x,y):y\in A\right\}$
can be effectively computed. A totally bounded subset of a metric space is also located (see
Proposition 2.2.9 in \citep{Bridges2007-techniques}). A metric between two subsets $A$ and $B$ is defined as $\rho\left(A,B\right)\triangleq\inf\left\{ \rho(x,y):x\in A,y\in B\right\} $.

A (uniformly continuous) \textbf{function} from a totally bounded
metric space $\left(X,\rho\right)$ to a metric space $\left(Y,\sigma\right)$
is a pair consisting of an operation $x\mapsto f(x),x\in X$ and an
operation $\omega:\mathbb{Q}\rightarrow\mathbb{Q}$ called \textbf{modulus}
\textbf{of} (uniform) \textbf{continuity} such that:
\[
\forall\varepsilon\in\mathbb{Q},\forall x,y\in X,\rho(x,y)\leq\omega(\varepsilon)\implies\sigma(f(x),f(y))\leq\varepsilon.
\]

A function is \textbf{Lipschitz continuous} if $\forall x,y\in X,\sigma(f(x),f(y))\leq L\cdot\rho(x,y)$ for some rational $L>0$. The set $\mathcal{F}$ of (all) uniformly continuous functions from a totally bounded metric space $\left(X,\rho\right)$ to a metric space $\left(Y,\sigma\right)$ together with the metric $\tau(f,g)\triangleq\underset{x\in X}{\sup}\sigma(f(x),f(y))$ for any $f,g\in\mathcal{F}$ is called the \textbf{function space} from $X$ to $Y$.
A function space \textbf{$\mathcal{F}$} is \textbf{equicontinuous} if there exists a common modulus of continuity for all $f$ in $\mathcal{F}$.
Further, $\mathcal{F}$ is the space of uniformly Lipschitz and uniformly bounded functions whenever there exists a common Lipschitz constant $L$ and respectively a common bound $K\in\mathbb{Q},K>0$ such that $\forall f\in\mathcal{F},\|f\|\le K$. A uniformly continuous \textbf{functional} $F$ on a totally bounded function space $\mathcal{F}$ is an operation $f\mapsto F[f]\in\mathbb{R}$ with a modulus of continuity $\alpha$ such that $|F[f]-F[g]|\leq\frac{1}{k}$ whenever $\tau(f,g)\leq\alpha\left(\frac{1}{k}\right)$.

The symbol $x^{i}$ denotes the $i-$th coordinate of the point $x$ in $\mathbb{R}^{n}$. The two common norms on $\mathbb{R}^{n}$ are the $d_{2}$--norm: $\|x\|_{2}=\left(\sum_{i=1}^{n}\left(x^{i}\right)^{2}\right)^{\nicefrac{1}{2}}$,
and the $d_{\infty}$--norm (or maximum norm): $\|x\|_{\infty}=\max_{i}\big|x^{i}\big|$.
The subscripts ``$2$'' and ``$\infty$'' may be omitted whenever
the type of the norm is clear from the context. The corresponding
metric between any two points $x,y$ is defined as $\|x-y\|$. For the metrics $d_{2}$ and $d_{\infty}$, the following holds: $\|\bullet\|_{\infty}\leq\|\bullet\|_{2}\leq\sqrt{n}\|\bullet\|_{\infty}$.
A real space $\mathbb{R}^{n}$ with the metric $d_{\infty}$
will be also denoted as $\left(\mathbb{R}^{n},d_{\infty}\right)$.
A (rational) closed \textbf{ball} $\bar{\mathcal{B}}(b,K)$ in $\mathbb{R}^{n}$
with a radius $K\in\mathbb{Q},K>0$ centered at $b\in\mathbb{Q}^{n}$
is the set $\left\{ x:x\in\mathbb{R}^{n}\land\|x-b\|\leq K\right\} $.
For example, with the $d_{\infty}$--metric, $\bar{\mathcal{B}}(b,K)$
is effectively a hypercube with $2^{n}$ vertices with rational coordinates.
On the reals $\mathbb{R}$, it is a compact interval. Clearly, a closed ball $\bar{\mathcal{B}}(b,K)$ is located. A \textbf{regular partition} with a step $\delta=\frac{K}{k},k\in\mathbb{N}$
on a closed ball $\bar{\mathcal{B}}(0,K)$ in $\mathbb{R}^{n}$ is
a finite set of points $\left\{ b_{i}\right\} _{i=1}^{N}\subset\bar{\mathcal{B}}(0,K),N\in\mathbb{N}$
with the coordinates satisfying all the combinations of the form $b_{i}^{j}:=\pm n_{ij}\delta,n_{ij}\in\left\{ 0, \dots, k\right\} $.
Notice that the number $N$ of partition points depends on the dimension
$n$ and the step $\delta$. For instance, a regular partition on
$\bar{\mathcal{B}}\left(\frac{1}{2},\frac{1}{2}\right)$ in $\mathbb{R}$
-- which is the unit interval $\left[0,1\right]$ -- with a step $\delta=\frac{1}{4}$
is the finite set of points $\left\{ 0,\frac{1}{4},\frac{1}{2},\frac{3}{4},1\right\} $.
Clearly, regular partitions on nontrivial closed balls exist and they may be considered as witnesses for total boundedness.
That is, for any approximation to a closed ball, there exists a regular partition with the same property that for
any point in the set there exists a point in the partition that is
close to the given point up to the given precision. The following simple technical result can be easily proven:
\begin{prop}
	Let $P=\left(p_{1}, \dots, ,p_{N}\right)$
	be a regular partition with a step $\delta=\frac{K}{k},k\in\mathbb{N}$
	on a closed ball $\bar{\mathcal{B}}(0,K)\subset\left(\mathbb{R}^{n},d_{\infty}\right)$.
	Then, for any $x \in \bar{\mathcal{B}}(0,K)$, there exists a closed ball $\bar{\mathcal{B}}\left(p_{i},\delta\right),p_{i}\in P$
	such that $x\in\bar{\mathcal{B}}\left(p_{i},\delta\right)$.
	\label{lem:detection-in-hypercubes}
\end{prop}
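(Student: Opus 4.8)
The plan is to exploit the product structure of the $d_\infty$--metric: since $\|x-p\|_\infty=\max_j|x^j-p^j|$, a point $p$ satisfies $\|x-p\|_\infty\le\delta$ as soon as each coordinate obeys $|x^j-p^j|\le\delta$. Because the coordinates of the partition points range independently over all multiples $m\delta$ with $m\in\{-k,\dots,k\}$, it suffices to solve the one--dimensional problem separately in each coordinate and then to assemble the resulting grid values into a single partition point.

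For a fixed coordinate $j$, note that $x\in\bar{\mathcal{B}}(0,K)$ gives $-K\le x^j\le K$, so the grid values $m\delta$, $m\in\{-k,\dots,k\}$, cover the range of $x^j$. The crucial point is that one cannot round the real number $x^j$ directly to a nearest grid value, since the nearest--point assignment jumps at the cell midpoints and is not constructively decidable there. Instead I would take a rational approximation $x^j(\ell)$ with $\ell$ so large that $\tfrac1\ell\le\tfrac{\delta}{2}$ (possible, as $\delta=\tfrac{K}{k}$ is a fixed positive rational), and round this \emph{rational} to the nearest multiple $m_j\delta$; for rationals this rounding is decidable. By regularity of the Cauchy representation, $|x^j-x^j(\ell)|\le\tfrac1\ell$, while rounding gives $|x^j(\ell)-m_j\delta|\le\tfrac{\delta}{2}$, so the triangle inequality yields $|x^j-m_j\delta|\le\tfrac1\ell+\tfrac{\delta}{2}\le\delta$. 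Should the rounding land outside $\{-k,\dots,k\}$ --- which can occur only in the borderline case where $x^j(\ell)$ lies just beyond $\pm K$ --- I would clamp $m_j$ to the nearest admissible index $\pm k$; a short check shows the bound $|x^j-m_j\delta|\le\delta$ is preserved.

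Assembling $p_i:=(m_1\delta,\dots,m_n\delta)$ yields a point whose coordinates are all of the admissible form $\pm n_{ij}\delta$ with $n_{ij}\in\{0,\dots,k\}$, hence $p_i\in P$. Passing to the maximum over the finitely many coordinate estimates gives $\|x-p_i\|_\infty=\max_j|x^j-m_j\delta|\le\delta$, which is precisely $x\in\bar{\mathcal{B}}(p_i,\delta)$.

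The main obstacle, and the only genuinely constructive step, is this rounding: the per--coordinate assignment has to be made decidable. It is resolved by rounding the rational approximant $x^j(\ell)$ rather than $x^j$ itself and by keeping a slack of $\tfrac{\delta}{2}$ between the rounding error and the target tolerance $\delta$, into which the approximation error $\tfrac1\ell$ is absorbed. The remaining steps --- the triangle inequality, the clamping, and the passage to the maximum --- are routine and hold constructively.
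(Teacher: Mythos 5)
Your proof is correct and follows essentially the same route as the paper's: replace $x$ by a rational approximant accurate to within $\delta/2$, make a decidable selection of a grid point by working only with rationals, and absorb the approximation error into the remaining $\delta/2$ of slack via the triangle inequality. The only difference is one of implementation: you round each coordinate independently (exploiting the product structure of $d_\infty$), whereas the paper enumerates all partition points and compares the rational, hence decidable, distances $\|x(m)-p_i\|$ against $\delta/2$ with an index tie-break --- both produce the same witness.
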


\begin{proof}
	Since $x$ is a tuple of $n$ real numbers $\left(x^{1}, \dots, x^{n}\right)$,
	an $m-$th rational approximation to $x$ is a tuple of rational numbers
	$x(m):=\left(x^{1}(m), \dots, x^{n}(m)\right)$. Indeed, for any $m'\in\mathbb{N}$,
	$\|x(m)-x(m')\|$$=\max_{i}\big|x^{i}(m)-x^{i}(m')\big|\leq\frac{1}{m}+\frac{1}{m'}$
	since $\big|x^{i}(m)-x^{i}(m')\big|\leq\frac{1}{m}+\frac{1}{m'}$
	for all $i\in\left\{ 1, \dots, n\right\}$. Let $m:=\lceil\frac{4}{\delta}\rceil$,
	then $\|x(m)-x\|\leq\frac{\delta}{2}$. Compute all distances $\big|\big|x(m)-p_{i}\big|\big|,p_{i}\in B$.
	If $\big|\big|x(m)-p_{i}\big|\big|<\frac{\delta}{2}$, then $x\in\bar{\mathcal{B}}\left(p_{i},\delta\right)$.
	If there are more than one such balls, pick the one with the smallest
	index. If $\big|\big|x(m)-p_{j}\big|\big|=\frac{\delta}{2}$ for some
	indices $j=j_{1}, \dots, j_{L}$, then pick the smallest such $j$ and
	conclude that $x\in\bar{\mathcal{B}}\left(p_{j},\delta\right)$.
\end{proof}

\begin{rem}
	In the current constructive setting, it cannot be deduced whether a point in a real metric
	space $\left(\mathbb{R}^{n},d_{\infty}\right)$ belongs to a subset
	$A$ or to a subset $B$ if $A\cap B$ has a dimension less than $n$.
	However, comparison of a real number with a non-trivial interval is decidable \ie whether a real number in the interval $\left[a,b\right]\subset\mathbb{R}$
	belongs to a non-trivial interval $I_{1}\subseteq\left[a,b\right]$
	or to $I_{2}\subseteq\left[a,b\right]$ if and only if
	$I_{1}\cap I_{2}$ is a non-trivial interval. In the proposition above, this fact is generalized to overlapping hypercubes.
\end{rem}

Further, an important result, called \emph{constructive Arzela--Ascoli's
	lemma}, which is due to \citet[p. 100]{Bishop1985-constr-analysis}, is recalled:

\begin{prop}
	Let $\mathcal{F}$ be an equicontinuous
	function space from a totally bounded metric space $\left(X,\rho\right)$
	to a metric space $\left(Y,\sigma\right)$. Suppose that for any finite\\
	$\frac{1}{k}$--approximation $\left\{ x_{1}, \dots, ,x_{N}\right\} $
	to $X$, the set $A:=\left\{ \left(f\left(x_{1}\right), \dots, f\left(x_{N}\right)\right):f\in\mathcal{F}\right\} \subset Y^{N}$
	is totally bounded. Then, $\mathcal{F}$ is totally bounded.
	\label{lem:Ascoli-Arzel-constr}
\end{prop}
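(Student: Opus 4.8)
The plan is to prove total boundedness of $\mathcal{F}$ directly from the definition: given $k\in\N$, I will exhibit a finite set of functions in $\mathcal{F}$ that forms a $\frac{1}{k}$--approximation with respect to the sup--metric $\tau$. The three ingredients are the common modulus of continuity $\omega$ supplied by equicontinuity, the total boundedness of $X$, and the hypothesis on $A\subset Y^{N}$; the whole argument is essentially the constructive composition of these three approximation procedures.

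First I would fix the target precision. Set $\varepsilon:=\frac{1}{3k}$ and let $\omega$ be a common modulus of continuity for $\mathcal{F}$, so that $\rho(x,y)\le\omega(\varepsilon)$ implies $\sigma(f(x),f(y))\le\varepsilon$ for every $f\in\mathcal{F}$ simultaneously. Since $\omega(\varepsilon)\in\Q$ is positive, I can choose $m:=\lceil 1/\omega(\varepsilon)\rceil$ and invoke total boundedness of $X$ to obtain a finite $\frac{1}{m}$--approximation $\left\{x_{1},\dots,x_{N}\right\}$ to $X$; then every $x\in X$ lies within $\omega(\varepsilon)$ of some sample point $x_{i}$. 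Next I apply the hypothesis to this very approximation: the set $A=\left\{\left(f(x_{1}),\dots,f(x_{N})\right):f\in\mathcal{F}\right\}\subset Y^{N}$ is totally bounded, so it has a finite $\varepsilon$--approximation whose members, being points of $A$, are the image--tuples $\left(f_{\ell}(x_{1}),\dots,f_{\ell}(x_{N})\right)$ of finitely many functions $f_{1},\dots,f_{M}\in\mathcal{F}$.

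It then remains to verify that $\left\{f_{1},\dots,f_{M}\right\}$ is the desired $\frac{1}{k}$--approximation to $\mathcal{F}$. Given an arbitrary $f\in\mathcal{F}$, its image--tuple lies in $A$, so there is an index $\ell$ with $\sigma(f(x_{i}),f_{\ell}(x_{i}))\le\varepsilon$ for all $i$. For any $x\in X$ I pick $x_{i}$ with $\rho(x,x_{i})\le\omega(\varepsilon)$ and estimate, via the triangle inequality together with the common modulus applied to both $f$ and $f_{\ell}$,
\[
\sigma(f(x),f_{\ell}(x))\le\sigma(f(x),f(x_{i}))+\sigma(f(x_{i}),f_{\ell}(x_{i}))+\sigma(f_{\ell}(x_{i}),f_{\ell}(x))\le 3\varepsilon=\tfrac{1}{k}.
\]
Since this bound holds uniformly in $x$, taking the supremum yields $\tau(f,f_{\ell})\le\frac{1}{k}$, as required. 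Duplicate functions among the $f_{\ell}$, if any, can be discarded so as to meet the ``unequal points'' clause of the definition.

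The step I expect to be the main obstacle is the passage from the $\varepsilon$--approximation of $A$ to the approximating \emph{functions} $f_{\ell}$ themselves. Classically one needs only the tuples, but here the approximants must be genuine elements of $\mathcal{F}$ so that they can be evaluated at \emph{every} $x\in X$, not merely at the sample points; constructively this requires that the membership $a_{\ell}\in A$ carry with it a witnessing function, which is precisely how $A$ is presented through $\mathcal{F}$. The remaining care is routine bookkeeping: checking that $\omega$ and the chosen $m$ make the two $\varepsilon$--budgets for the $f$-- and $f_{\ell}$--variation line up, and that all quantities involved ($\omega(\varepsilon)$, and the distances used to select $x_{i}$ and the index $\ell$) are rational and computable, so that the construction is genuinely effective.
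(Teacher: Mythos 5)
Your proof is correct and takes essentially the same route as the paper's: choose the sample points via an $\omega\left(\tfrac{1}{3k}\right)$--approximation to $X$, pull a finite approximation of $A$ back to witnessing functions $f_{1},\dots,f_{M}\in\mathcal{F}$, and conclude by the three--term triangle inequality using the common modulus twice. The only cosmetic difference is the split of the error budget (you use $\varepsilon=\tfrac{1}{3k}$ for the $A$--approximation where the paper uses $\tfrac{1}{4k}$), and your remark that membership in $A$ must carry a witnessing function is exactly the constructive point the paper relies on implicitly.
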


\begin{proof}
	Let $\omega$ be the continuity modulus for the function space $\mathcal{F}$.
	Let $k\in\mathbb{N}$, and let $\left\{ x_{1}, \dots, ,x_{N}\right\} $
	be an $\omega\left(\frac{1}{3k}\right)$--approximation to $X$. By
	assumption, the set $A$ is totally bounded. Let $\left\{ f_{1}, \dots, ,f_{M}\right\} $
	be a set of functions in $\mathcal{F}$ such that the points $a_{i}:=\left(f_{i}\left(x_{1}\right), \dots, f_{i}\left(x_{N}\right)\right),$$i=1, \dots, M$
	form a finite $\frac{1}{4k}$--approximation to $X$. Then, for an
	arbitrary $f\in\mathcal{F}$, it follows that there exists an $f_{i}$
	such that $\sum_{j=1}^{N}\sigma\left(f_{i}\left(x_{j}\right),f\left(x_{j}\right)\right)\le\frac{1}{4k}$.
	For an arbitrary $x\in X$, there exists an $x_{j}$ such that $\rho\left(x,x_{j}\right)\leq\omega\left(\frac{1}{3k}\right)$.
	Then,
	\begin{align*}
		& \sigma\left(f_{i}(x),f(x)\right) \leq \\ 
		& \sigma\left(f_{i}(x),f_{i}\left(x_{j}\right)\right)+\sigma\left(f_{i}\left(x_{j}\right),f\left(x_{j}\right)\right)+\sigma\left(f_{i}\left(x_{j}\right),f\left(x\right)\right) \leq \\
		& \dfrac{1}{3k}+\dfrac{1}{4k}+\dfrac{1}{3k}.
	\end{align*}
	
	It follows that $\tau\left(f_{i},f\right)\leq\frac{1}{k}$. Therefore,
	$\left\{ f_{1}, \dots, ,f_{M}\right\} $ is a finite $\frac{1}{k}-$approximation
	to $\mathcal{F}$ whence $\mathcal{F}$ is totally bounded.
\end{proof}


\section{Main results\label{sec:main-result}}

Based on the preliminaries of the previous section, the new result on approximate optimal control policies can be derived. This is made in two steps. First, it is shown that certain function spaces, which represent the sets of admissible control policies, admit finite approximations provided that they satisfy certain assumptions which are, however, applicable in practice.

\subsection{Finite approximations to function spaces}

The central theorem of this section is stated as follows:

\begin{prop}
	Let $\mathcal{F}$ be the space
	of uniformly Lipschitz and uniformly bounded real--valued functions
	on a totally bounded metric space $\left(X,\rho\right)$. Then $\mathcal{F}$
	is totally bounded.
	\label{thm:constr-tot-bound-fnc-spcs}
\end{prop}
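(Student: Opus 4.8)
The plan is to derive the result from the constructive Arzela--Ascoli lemma (Proposition \ref{lem:Ascoli-Arzel-constr}). That lemma reduces total boundedness of $\mathcal{F}$ to two facts: that $\mathcal{F}$ is equicontinuous, and that for every finite $\frac{1}{k}$--approximation $\left\{x_1,\dots,x_N\right\}$ of $X$ the value set $A=\left\{(f(x_1),\dots,f(x_N)):f\in\mathcal{F}\right\}\subset\mathbb{R}^N$ is totally bounded. I would verify these two hypotheses and then simply invoke the lemma.

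Equicontinuity is immediate: since every $f\in\mathcal{F}$ shares the common rational Lipschitz constant $L$, the single operation $\omega(\varepsilon)=\varepsilon/L$ is a common modulus of continuity, because $\rho(x,y)\le\varepsilon/L$ forces $|f(x)-f(y)|\le L\,\rho(x,y)\le\varepsilon$ for all $f$ at once. For the second hypothesis, note first that uniform boundedness by $K$ places $A$ inside the box $\bar{\mathcal{B}}(0,K)\subset(\mathbb{R}^N,d_\infty)$, which is itself totally bounded through its regular partitions. However, total boundedness of $A$ in the constructive sense demands net points that actually lie in $A$, that is, tuples genuinely realized by functions of $\mathcal{F}$; the partition points of the ambient box need not be realizable, so a more careful construction is required.

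This realizability is the main obstacle, and it is where the construction must be explicit, as the abstract anticipates. Given $k$, I would fix a rational grid on $[-K,K]$ of spacing $\eta\approx\frac{1}{4k}$ and enumerate all $N$--tuples $(w_1,\dots,w_N)$ of grid values obeying the discrete Lipschitz admissibility condition $|w_i-w_j|\le L\,\rho(x_i,x_j)+2\eta$. For each admissible tuple I would build the Lipschitz interpolant $g(x)=\min_j\bigl(w_j+L\,\rho(x,x_j)\bigr)$, a McShane-type extension, and then truncate it as $\max\left\{-K,\min\left\{g,K\right\}\right\}$; truncation preserves the constant $L$ and enforces the bound $K$, so the resulting function lies in $\mathcal{F}$ and its value tuple lies in $A$. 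These finitely many tuples form the candidate net. Given an arbitrary $f\in\mathcal{F}$, rounding each $f(x_j)$ to the nearest grid value yields an admissible tuple within $\eta$ of $(f(x_1),\dots,f(x_N))$ --- admissibility holds precisely because $f$ itself satisfies the Lipschitz inequality --- so the net $\frac{1}{k}$--approximates $A$. I would take care that the approximation is measured in the summed metric $\sum_{j}\sigma(\cdot,\cdot)$ used inside Proposition \ref{lem:Ascoli-Arzel-constr}; since the sum has only $N$ terms, shrinking $\eta$ by a factor proportional to $N$ recovers any prescribed tolerance.

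With both hypotheses of Proposition \ref{lem:Ascoli-Arzel-constr} established, the lemma yields a finite $\frac{1}{k}$--approximation of $\mathcal{F}$ for every $k$, which is exactly total boundedness. The delicate points to watch are the bookkeeping of constants --- choosing the grid spacing and the net of $X$ fine enough that the interpolation error, the rounding error, and the continuity error together stay below $\frac{1}{k}$ --- and the verification that the truncated interpolant is a legitimate member of $\mathcal{F}$ with a computable modulus, rather than settling for the mere existence of a net for the ambient box.
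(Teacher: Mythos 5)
Your proposal is correct and takes essentially the same route as the paper: both verify the hypotheses of the constructive Arzela--Ascoli lemma (Proposition~\ref{lem:Ascoli-Arzel-constr}) by discretizing $[-K,K]$ into a rational grid, realizing the grid data as $L$--Lipschitz functions via a McShane-type extension, and truncating with $\max\{\min\{\cdot,K\},-K\}$ so the approximants genuinely lie in $\mathcal{F}$ (the paper builds the grid values inductively, shifting partition points by $2\delta$ per step, and averages the two one-sided extensions so that it interpolates exactly, but these differences are cosmetic). One caveat: ``rounding $f(x_j)$ to the nearest grid value'' and testing $|w_i-w_j|\le L\rho(x_i,x_j)+2\eta$ are not exactly decidable constructively, so you should replace them by the overlapping-interval selection of Proposition~\ref{lem:detection-in-hypercubes} and an approximate rational comparison --- or simply enumerate \emph{all} grid tuples, since the McShane extension is $L$--Lipschitz regardless of admissibility --- all of which your $\eta$--slack already absorbs.
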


\begin{proof}
	Let $X_{0}$ be a finite subset of $X$ consisting of unequal points
	$\left\{ x_{1}, \dots, ,x_{N}\right\} ,$ $N\in\mathbb{N}$. Suppose that
	$L$ and $K$ are the uniform Lipschitz constant and uniform bound
	for $\mathcal{F}$ respectively. First, show that the subset
	\[
	Y:=\left\{ \left(f\left(x_{1}\right), \dots, f\left(x_{N}\right)\right):f\in\mathcal{F}\right\} 
	\]		
	of $\mathbb{R}^{N}$ with the product metric is totally bounded. To
	this end, let $P=\left(p_{1}, \dots, ,p_{M}\right),M\in\mathbb{N}$ be
	a regular partition of $\left[-K,K\right]$ with a step $\delta:=\frac{1}{k},k\in\mathbb{N}$.
	Let $f$ be any function from the function space $\mathcal{F}$ and
	fix some arbitrary $n\in\mathbb{N}$. Construct a piece--wise linear
	function $\varphi:X\longrightarrow\mathbb{R}$ such that $\forall x,y\in X.|\varphi(x)-\varphi(y)|\leq L\rho(x,y)$
	and $\forall x_{i}\in X_{0}.\big|f\left(x_{i}\right)-\varphi\left(x_{i}\right)\big|\le\frac{1}{nN}$.
	By the product metric, the latter condition would imply that $\big|\big|\left(f\left(x_{1}\right), \dots, f\left(x_{N}\right)\right)-\left(\varphi\left(x_{1}\right), \dots, \varphi\left(x_{N}\right)\right)\big|\big|\leq\frac{1}{n}$.
	First, the image of $\varphi$ on $X_{0}$ is constructed inductively.
	By Proposition \ref{lem:detection-in-hypercubes}, for any $x\in X$ and
	$f\in\mathcal{F}$, there exists a $p_{i}\in P$ such that $\big|f(x)-p_{i}\big|\le\delta$.
	Suppose that $f\left(x_{1}\right),f\left(x_{2}\right)$ are within
	some closed balls $\bar{\mathcal{B}}\left(p_{j_{1}},\delta\right),\bar{\mathcal{B}}\left(p_{j_{2}},\delta\right),j_{1},j_{2}\in\left\{ 1, \dots, M\right\} $
	respectively. Let $\varphi\left(x_{1}\right):=p_{j_{1}}$. Observe
	that since $\big|f\left(x_{1}\right)-f\left(x_{2}\right)\big|\le L\rho\left(x_{1},x_{2}\right)$,
	it follows that $\big|p_{j_{1}}-p_{j_{2}}\big|\le L\rho\left(x_{1},x_{2}\right)+2\delta$.
	Notice that $p_{j_{1}}$and $p_{j_{2}}$ are rational numbers. It can, therefore, be assumed that either $p_{j_{1}}-p_{j_{2}}>2\delta$, or
	$p_{j_{1}}-p_{j_{2}}<-2\delta$, or $\big|p_{j_{1}}-p_{j_{2}}\big|\le2\delta$.
	The first two cases are analogous whence one may assume that $p_{j_{1}}-p_{j_{2}}>2\delta$.
	Let $\varphi\left(x_{2}\right):=p_{j_{2}}-2\delta$. This setting
	ensures the Lipschitz condition. Indeed,
	\begin{align*}
		& \big|\varphi\left(x_{1}\right)-\varphi\left(x_{2}\right)\big|=p_{j_{2}}-2\delta-p_{j_{1}} \le \big|p_{j_{1}}-p_{j_{2}}\big|-2\delta\le L\rho\left(x_{1},x_{2}\right).
	\end{align*}
	
	On the other hand, since $f\left(x_{2}\right)\in\bar{\mathcal{B}}\left(p_{j_{2}},\delta\right)$
	and by the setting of $\varphi\left(x_{2}\right)$ it follows that
	$f\left(x_{2}\right)\in\bar{\mathcal{B}}\left(\varphi\left(x_{2}\right),\delta+2\delta\right)$.
	If $\big|p_{j_{2}}-p_{j_{1}}\big|\leq2\delta$, then setting $\varphi\left(x_{2}\right):=p_{j_{2}}$
	ensures the same conditions. Suppose now that, at the step $i$, $f\left(x_{i}\right)\in\bar{\mathcal{B}}\left(\varphi\left(x_{2}\right),\delta+2(i-1)\delta\right)$.
	Assume that $f\left(x_{i+1}\right)\in\bar{\mathcal{B}}\left(p_{j_{i+1}},\delta\right),j_{i+1}\in\left\{ 1, \dots, M\right\} $.
	Following exactly the same procedure, one may pick the next value of
	$\varphi$ so that the approximation radius grows by $2\delta$ whereas
	the Lipschitz condition is satisfied. After the step $N$, it holds that
	$f\left(x_{N}\right)\in\bar{\mathcal{B}}\left(\varphi\left(x_{N}\right),\delta+2(N-1)\delta\right)$.
	Therefore, setting $k$ equal to $2nN^{2}$ ensures
	$\big|\big|\left(f\left(x_{1}\right), \dots, f\left(x_{N}\right)\right)-\left(\varphi\left(x_{1}\right), \dots, \varphi\left(x_{N}\right)\right)\big|\big|\leq\frac{1}{n}$.
	Now, extend $\varphi$ to the whole space $X$. To this end, let
	\begin{align*}
		& \psi(x):= \frac{1}{2}\left[\max_{i}\left(\varphi\left(x_{i}\right)-L\rho(x,x_{i})\right)+\min_{i}\left(\varphi\left(x_{i}\right)+L\rho(x,x_{i})\right)\right], \forall x\in X.
	\end{align*}
	
	Proposition \ref{lem:max-2-reals} implies that $\psi(x_{j})=\varphi(x_{j})$.
	It follows from the fact that $\forall j,\varphi(x_{i})-L\rho(x_{j},x_{i})\le\varphi(x_{j})$.
	To see that $\psi(x)$ is an $L$--Lipschitz function, observe that
	$\rho(x,x_{i})$ is a $1$--Lipschitz function of $x$ whence $\varphi\left(x_{i}\right)-L\rho(x,x_{i})$
	is an $L$--Lipschitz function of $x$ for each $i$. Therefore, $\max_{i}\left(\varphi\left(x_{i}\right)-L\rho(x,x_{i})\right)$
	and $\min_{i}\left(\varphi\left(x_{i}\right)+L\rho(x,x_{i})\right)$
	are uniformly continuous functions with the same Lipschitz constant
	$L$. The factor $\frac{1}{2}$ ensures that $\psi(x)$ is $L$--Lipschitz.
	Let $\varphi(x):=\max\{\min\{\psi(x),K\},-K\}$. Due to the properties
	of the minimum and maximum \citep[p.~23]{Bishop1985-constr-analysis},
	it follows that $\varphi$ is an $L$--Lipschitz continuous function
	satisfying $\varphi(x)\le K$ and $\varphi(x)\ge-K$. Thus, $\varphi$
	belongs to the function space $\mathcal{F}$ and approximates $f$
	at the points $X_{0}$ arbitrarily close. Since $\varphi$ is uniquely
	defined by its values at $X_{0}$, and the values $\left\{ \varphi\left(x_{i}\right):x_{i}\in X_{0}\right\} $
	take place in a finite set $P$, whereas the distances between the
	function values at each two points of $X_{0}$ have fixed bounds,
	there are finitely many such functions. Further, since $f$ was arbitrary,
	it follows that $Y$ is totally bounded. By the constructive Arzela--Ascoli's
	proposition \ref{lem:Ascoli-Arzel-constr}, the function space $\mathcal{F}$
	is totally bounded.
\end{proof}

\begin{cor}
	Let $\mathcal{F}$ be the space of uniformly Lipschitz and uniformly
	bounded functions from a totally bounded metric space $\left(X,\rho\right)$
	to $\mathbb{R}^{m},m\in\mathbb{N}$ with the $d_{\infty}$--metric.
	Then $\mathcal{F}$ is totally bounded.
\end{cor}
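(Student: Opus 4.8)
The plan is to reduce the vector-valued case to the already-proved scalar Proposition \ref{thm:constr-tot-bound-fnc-spcs} by decomposing each function into its $m$ coordinate maps. Writing $f=(f^{1}, \dots, f^{m})$ with $f^{i}:X\to\mathbb{R}$, I would first observe that the $d_{\infty}$--metric makes each coordinate inherit the uniform structure of $f$: since
\[
|f^{i}(x)-f^{i}(y)|\le\max_{j}|f^{j}(x)-f^{j}(y)|=\|f(x)-f(y)\|_{\infty}\le L\rho(x,y),
\]
each $f^{i}$ is $L$--Lipschitz, and similarly $|f^{i}(x)|\le\|f(x)\|_{\infty}\le K$ shows each $f^{i}$ is bounded by $K$. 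Hence every coordinate of every $f\in\mathcal{F}$ lies in the single scalar space $\mathcal{G}$ of $L$--Lipschitz, $K$--bounded real--valued functions on $\left(X,\rho\right)$, which is totally bounded by Proposition \ref{thm:constr-tot-bound-fnc-spcs}.

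Next I would make the correspondence $f\mapsto(f^{1}, \dots, f^{m})$ into a surjective isometry onto $\mathcal{G}^{m}$ equipped with the max product metric. Because the codomain metric $\sigma$ is $d_{\infty}$ and the index set is finite, the supremum and the maximum commute:
\[
\tau(f,g)=\sup_{x\in X}\|f(x)-g(x)\|_{\infty}=\max_{i}\sup_{x\in X}|f^{i}(x)-g^{i}(x)|=\max_{i}\tau_{\mathcal{G}}(f^{i},g^{i}),
\]
where $\tau_{\mathcal{G}}$ is the sup--metric on $\mathcal{G}$. Surjectivity holds because any tuple $(g^{1}, \dots, g^{m})$ of $L$--Lipschitz, $K$--bounded scalar functions reassembles into an $\mathbb{R}^{m}$--valued function that is $L$--Lipschitz in $d_{\infty}$ and bounded by $K$, hence a member of $\mathcal{F}$; this surjectivity is what guarantees that the approximating functions produced below actually belong to $\mathcal{F}$, as the definition of total boundedness requires. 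Given a finite $\frac{1}{k}$--approximation $\{g_{1}, \dots, g_{P}\}$ to $\mathcal{G}$, I would then take the Cartesian product $\{(g_{i_{1}}, \dots, g_{i_{m}}):i_{1}, \dots, i_{m}\in\{1, \dots, P\}\}$ of $P^{m}$ tuples; approximating each coordinate separately and taking the maximum shows this is a finite $\frac{1}{k}$--approximation to $(\mathcal{G}^{m},\max)$, which pulls back through the isometry to a finite $\frac{1}{k}$--approximation to $\mathcal{F}$ (discarding coincidences to keep the points unequal). Since $k$ is arbitrary, $\mathcal{F}$ is totally bounded.

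The argument involves no serious obstacle; the only points needing care are constructive ones. One must check that a finite product of totally bounded spaces is again totally bounded, which is immediate here since the approximating set is the explicit Cartesian product of the coordinate approximations, and that the max--metric identity above holds exactly -- this relies precisely on $d_{\infty}$ using a maximum over finitely many coordinates, so the supremum and the maximum may be interchanged without a constructive decision. An alternative would be to invoke the constructive Arzel\`a--Ascoli Proposition \ref{lem:Ascoli-Arzel-constr} directly, verifying that $\{(f(x_{1}), \dots, f(x_{N})):f\in\mathcal{F}\}\subset(\mathbb{R}^{m})^{N}$ is totally bounded; but this merely re--derives the coordinate--wise construction of Proposition \ref{thm:constr-tot-bound-fnc-spcs} in $mN$ components and is less economical. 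For the $d_{2}$--metric the same conclusion follows after absorbing the factor $\sqrt{m}$ from $\|\bullet\|_{\infty}\le\|\bullet\|_{2}\le\sqrt{m}\|\bullet\|_{\infty}$.
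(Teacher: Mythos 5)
Your proof is correct and follows essentially the same route as the paper's: a coordinate-wise reduction to the scalar case, exploiting the fact that the $d_{\infty}$--metric decouples the coordinates so that each $f^{i}$ inherits the common Lipschitz constant and bound. The paper states this in one line (``the same procedure done for each coordinate separately''), while you make the reduction explicit via the isometry with $\mathcal{G}^{m}$ and the Cartesian product of scalar approximations, but the underlying idea is identical.
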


\begin{proof}
	The proof amounts to the same procedure, as in the proof of the theorem,
	done for each coordinate separately since 
	\begin{align*}
		& \forall i=1, \dots, m,\big|x^{i}-y^{i}\big| \le \varepsilon \iff \|x-y\|_{\infty}=\max_{i}\big|x^{i}-y^{i}\big| \le \varepsilon
	\end{align*}
	for any $x$ and $y$ in $\left(\mathbb{R}^{m},d_{\infty}\right)$
	and $\varepsilon>0$. 
\end{proof}

\begin{rem}
	The same result applies if there is a uniform bound and uniform Lipschitz
	constant for each dimension separately: $\exists\left(K_{1}, \dots, K_{m}\right),\forall f\in\mathcal{F},\forall x\in X,$$|f^{i}(x)|\leq K_{i},i=1, \dots, m$
	and $\exists\left(L_{1}, \dots, L_{m}\right),$ $\forall f\in\mathcal{F},\forall x,y\in X,$$\big|f^{i}(x)-f^{i}(y)\big|\leq L\rho(x,y),i=1, \dots, m$.
	The proof is by rescaling of the hypercuboid with the side lengths
	$\left(2K_{1}, \dots, 2K_{m}\right)$ centered at the origin.
\end{rem}

\begin{cor}
	Let $\mathcal{\bar{C}}^{1}$ be the space of uniformly bounded functions from a compact set $X\subset\mathbb{R}^{n},n\in\mathbb{N}$
	to $\mathbb{R}^{m},m\in\mathbb{N}$ with the $d_{\infty}$--metric,
	and suppose that the derivatives of the functions in $\mathcal{\bar{C}}^{1}$
	are uniformly bounded. Then $\mathcal{\bar{C}}^{1}$ is totally bounded.
	\label{cor:C-1-bar-tot-bounded}
\end{cor}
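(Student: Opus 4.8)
The plan is to reduce the statement to the preceding corollary on uniformly Lipschitz and uniformly bounded $\mathbb{R}^m$-valued functions (with the $d_\infty$-metric). The only hypothesis of that corollary not literally assumed here is uniform Lipschitz continuity, and this will be recovered from the uniform bound on the derivatives. Concretely, I would first observe that a compact set $X\subset\mathbb{R}^n$ is, in the present setting, totally bounded, so the domain hypothesis of that corollary is met. It then remains to convert ``uniformly bounded derivatives'' into ``uniform Lipschitz constant,'' after which the previous corollary applies and delivers total boundedness of $\mathcal{\bar{C}}^{1}$.

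The core step is the implication that a bound on the Jacobian yields a Lipschitz bound. For $f\in\mathcal{\bar{C}}^{1}$ and two points $x,y\in X$ joined by a segment lying in (a convex neighborhood of) $X$, the constructive fundamental theorem of calculus \citep{Bishop1985-constr-analysis} gives, coordinatewise,
\[
f^{i}(x)-f^{i}(y)=\int_{0}^{1}\nabla f^{i}\big(y+t(x-y)\big)\cdot(x-y)\diff t,
\]
so that $\big|f^{i}(x)-f^{i}(y)\big|\le\big(\sup_{z}\|\nabla f^{i}(z)\|_{2}\big)\|x-y\|_{2}$. If $\Lambda$ denotes the assumed uniform bound on the derivatives, this produces a single Lipschitz constant valid for every $f\in\mathcal{\bar{C}}^{1}$ simultaneously. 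The norm inequalities $\|\bullet\|_{\infty}\le\|\bullet\|_{2}\le\sqrt{n}\|\bullet\|_{\infty}$ recalled earlier let me pass to the $d_\infty$-metric, and since the earlier corollary requires a rational constant I would simply replace the resulting bound by any rational upper bound $L\in\mathbb{Q}$ of $\sqrt{n}\,\Lambda$. Thus $\mathcal{\bar{C}}^{1}$ is a space of uniformly Lipschitz (constant $L$) and uniformly bounded functions.

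With these two facts in hand, the conclusion is immediate: $\mathcal{\bar{C}}^{1}$ satisfies all hypotheses of the previous corollary, hence is totally bounded. I expect the main obstacle to be the geometry of the domain rather than the analysis: the mean value estimate above uses the segment $\{y+t(x-y)\}$, which lies in $X$ only when $X$ is convex. For the cases of interest in the paper -- where $X$ is a closed ball, \ie a hypercube in the $d_\infty$-metric -- convexity holds and the argument goes through directly. For a general compact $X$ I would either assume the functions are restrictions of maps defined on a convex neighborhood (so that the derivative bound is genuinely available along the connecting segment) or connect $x$ and $y$ by a path in $X$ of controlled length and bound the increment along it; in either case the uniform Lipschitz constant persists and the reduction to the previous corollary is unaffected.
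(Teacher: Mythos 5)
Your reduction misreads what the previous corollary actually delivers. That corollary (and Theorem~\ref{thm:constr-tot-bound-fnc-spcs} behind it) asserts total boundedness of \emph{the} space $\mathcal{F}$ of \emph{all} uniformly Lipschitz, uniformly bounded functions, and its proof produces a finite approximating net consisting of \emph{piecewise linear} functions --- elements of $\mathcal{F}$ but not of $\mathcal{\bar{C}}^{1}$. Showing, as you do, that every $g\in\mathcal{\bar{C}}^{1}$ admits a common rational Lipschitz constant $L$ only establishes the inclusion $\mathcal{\bar{C}}^{1}\subset\mathcal{F}$; the inclusion is strict, and by the paper's definition of total boundedness a $\frac{1}{k}$--approximation to $\mathcal{\bar{C}}^{1}$ must be a finite subset \emph{of} $\mathcal{\bar{C}}^{1}$ itself. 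In the constructive setting of the paper, a subset of a totally bounded space need not be totally bounded unless it is located or dense (classically one would pick, for each net point, a nearby element of the subset, but that selection is exactly the kind of decision the paper is at pains to avoid). So ``$\mathcal{\bar{C}}^{1}$ satisfies all hypotheses of the previous corollary'' does not license the conclusion: the hypotheses of that corollary describe the full space, not an arbitrary equi-Lipschitz, equi-bounded family.

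The missing idea is the density/smoothing step, which is the real content of this corollary. The paper shows $\mathcal{\bar{C}}^{1}$ is \emph{dense} in $\mathcal{F}$: given $f\in\mathcal{F}$ and $k$, the piecewise linear approximant $\varphi$ accurate to $\frac{1}{2k}$ is mollified by convolution with a bump function (Appendix) into an analytic function within $\frac{1}{2k}$ of $\varphi$ and with the \emph{same} Lipschitz constant, hence a $\mathcal{\bar{C}}^{1}$ function within $\frac{1}{k}$ of $f$; a dense subset of a totally bounded set is totally bounded \citep[p. 28]{Bridges1987-varieties}. Your first part --- deriving the uniform Lipschitz constant from the derivative bound, including the caveat about convexity of the domain and rounding up to a rational $L$ --- is sound and corresponds to the paper's (terse) justification that $\mathcal{\bar{C}}^{1}\subset\mathcal{F}$, but without the mollification argument the proof does not reach the stated conclusion.
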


\begin{proof}
	Let $\mathcal{F}$ denote the space of functions as described in
	the theorem. Fix an arbitrary function $g$ from $\mathcal{\bar{C}}^{1}$.
	Clearly, $g$ is a function in $\mathcal{F}$ whence $\mathcal{\bar{C}}^{1}\subset\mathcal{F}$.
	The converse is not true. It suffices to show that $\mathcal{\bar{C}}^{1}$ is dense in $\mathcal{F}$. Fix an arbitrary function $f\in\mathcal{F}$
	and a number $k\in\mathbb{N}$. Following the construction as in the
	proof, one can derive a piece--wise linear function $\varphi:\mathbb{R}^{n}\rightarrow\left(\mathbb{R}^{m},d_{\infty}\right)$
	that approximates $f$ on $X$ up to the precision $\frac{1}{2k}$.
	Further, one can construct an analytic function $\varphi_{2k}$ that
	approximates $\varphi$ up to the precision $\frac{1}{2k}$ and has
	the same Lipschitz constant (see details in Appendix).
	Therefore, $\varphi_{k}$ approximates $f$ up to the precision $\frac{1}{k}$.
	Therefore, the set $\mathcal{\bar{C}}^{1}$, as a dense subset of
	a totally bounded set $\mathcal{F}$, is itself totally bounded \citep[p. 28]{Bridges1987-varieties}.
\end{proof}

\begin{cor}
	Let $\mathcal{\bar{C}}^{N},N\in\mathbb{N}$
	be the space of uniformly bounded functions from a compact set $X\subset\mathbb{R}^{n},n\in\mathbb{N}$
	to $\mathbb{R}^{m},m\in\mathbb{N}$ with the $d_{\infty}$--metric,
	and suppose that the derivatives of the functions in $\mathcal{\bar{C}}^{N}$
	up to order $N$ are uniformly bounded. Then $\mathcal{\bar{C}}^{N}$
	is totally bounded.
	\label{cor:smooth-fncs-tot-bounded}
\end{cor}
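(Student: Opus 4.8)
The plan is to argue by induction on $N$, taking Corollary~\ref{cor:C-1-bar-tot-bounded} (the case $N=1$) as the base and reducing the order-$N$ statement to the order-$(N-1)$ statement by passing to derivatives. First I would note that every $f\in\mathcal{\bar{C}}^{N}$ has, in particular, a uniformly bounded first derivative, hence is uniformly Lipschitz and uniformly bounded; so $\mathcal{\bar{C}}^{N}$ is equicontinuous and sits inside the totally bounded space $\mathcal{F}$ of Proposition~\ref{thm:constr-tot-bound-fnc-spcs}. The structural core is that the total derivative map $D\colon f\mapsto\left(\partial_{1}f,\dots,\partial_{n}f\right)$ carries $\mathcal{\bar{C}}^{N}\left(X,\mathbb{R}^{m}\right)$ into $\mathcal{\bar{C}}^{N-1}\left(X,\mathbb{R}^{mn}\right)$: the derivatives of $Df$ up to order $N-1$ are exactly the derivatives of $f$ of orders $2,\dots,N$, uniformly bounded by hypothesis, and $Df$ itself is uniformly bounded. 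By the inductive hypothesis the space $\mathcal{G}:=\mathcal{\bar{C}}^{N-1}\left(X,\mathbb{R}^{mn}\right)$ is totally bounded.

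Next I would recover $f$ from the pair $\left(f\left(x_{0}\right),Df\right)$ for a fixed base point $x_{0}\in X$. Taking $X$ convex (as are the closed balls used throughout, and as in the optimal--control setting of Section~\ref{sec:case-study}), integration along the segment from $x_{0}$ to $x$ gives the reconstruction
\begin{align*}
	& \Psi\left(v,G\right)(x):=v+\int_{0}^{1}G\left(x_{0}+t\left(x-x_{0}\right)\right)\left(x-x_{0}\right)\diff t,
\end{align*}
defined for every $v\in[-K,K]^{m}$ and every $G\in\mathcal{G}$, and satisfying $\Psi\left(f\left(x_{0}\right),Df\right)=f$. Using $\|x-x_{0}\|\le\mathrm{diam}\,X$ one checks that $\Psi$ is Lipschitz from the product $[-K,K]^{m}\times\mathcal{G}$, with the product metric, into $\left(\mathcal{F},\tau\right)$, since $\tau\left(\Psi\left(v,G\right),\Psi\left(w,H\right)\right)\le\|v-w\|+C\,\tau\left(G,H\right)$ with $C=C\left(n,\mathrm{diam}\,X\right)$. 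The cube $[-K,K]^{m}$ is totally bounded and $\mathcal{G}$ is totally bounded by induction, so their product is, and therefore the uniformly continuous image $T:=\Psi\left([-K,K]^{m}\times\mathcal{G}\right)$ is totally bounded and contains $\mathcal{\bar{C}}^{N}$.

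The main obstacle is the last, genuinely constructive step. Being contained in the totally bounded set $T$ does not by itself make $\mathcal{\bar{C}}^{N}$ totally bounded, because a generic $G\in\mathcal{G}$ is not a gradient field, so the corresponding $\Psi\left(v,G\right)$ need neither lie in $\mathcal{\bar{C}}^{N}$ nor be approximable within it. I would handle this exactly as the proof of Corollary~\ref{cor:C-1-bar-tot-bounded} handles the analogous point, namely by building an explicit finite net \emph{inside} $\mathcal{\bar{C}}^{N}$ and invoking that a dense subset of a totally bounded set is totally bounded \citep[p.~28]{Bridges1987-varieties}. The enabling observation is that mollification preserves every derivative bound up to order $N$ -- if $\|D^{\alpha}f\|\le B_{|\alpha|}$ then $D^{\alpha}\left(f*\chi_{\delta}\right)=\left(D^{\alpha}f\right)*\chi_{\delta}$ keeps the same bound -- so the analytic functions are $\tau$--dense in $\mathcal{\bar{C}}^{N}$; the finitely many net elements are then produced by integrating genuine polynomial (curl--free) gradient fields that approximate the $Df$ near each net point of $\mathcal{G}$, with $f\left(x_{0}\right)$ taken from a net of $[-K,K]^{m}$. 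I expect the delicate part to be the bookkeeping that guarantees these integrated representatives genuinely belong to $\mathcal{\bar{C}}^{N}$ -- in particular that they respect the uniform bound $K$ after integration and that their order-$N$ derivative bounds are retained -- whereas the reduction to $\mathcal{\bar{C}}^{N-1}$ through the derivative map is the conceptual heart of the argument.
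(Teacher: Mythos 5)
The paper offers no proof of Corollary \ref{cor:smooth-fncs-tot-bounded}; it is stated immediately after Corollary \ref{cor:C-1-bar-tot-bounded}, evidently with the intention that the same density argument carries over. You were right not to simply repeat that argument: for $N\ge 2$ the space $\mathcal{\bar{C}}^{N}$ is \emph{not} dense in the uniformly Lipschitz, uniformly bounded space $\mathcal{F}$ (if $|g''|\le B$ and $\|g-|x|\|_{\infty}\le\delta$ on $[-1,1]$, then a Taylor estimate at $h=1/B$ gives $\delta\ge 1/(4B)$), so the corollary genuinely needs a new argument. Your reduction --- the derivative map into $\mathcal{\bar{C}}^{N-1}(X,\mathbb{R}^{mn})$, the line--integral reconstruction $\Psi$, and the observation that $\Psi$ is uniformly continuous on the totally bounded product $[-K,K]^{m}\times\mathcal{G}$ --- is sound as far as it goes (modulo the fact that you quietly assume $X$ convex, which is not in the statement and is actually essential for $f$ to be recoverable from $(f(x_{0}),Df)$ at all).

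The genuine gap is the last step, and you have correctly located it but not closed it. What you establish is that $\mathcal{\bar{C}}^{N}$ is \emph{contained in} a totally bounded set $T=\Psi([-K,K]^{m}\times\mathcal{G})$. Under the paper's definition, a $\frac{1}{k}$--approximation to $\mathcal{\bar{C}}^{N}$ must consist of points \emph{of} $\mathcal{\bar{C}}^{N}$, and constructively a subset of a totally bounded set is totally bounded only if it is located (this is exactly the Lemma~4.3 of \citet{Ye2011-SF} that the paper invokes elsewhere). Your proposed patch --- integrate ``genuine polynomial curl--free fields that approximate the $Df$ near each net point of $\mathcal{G}$'' --- presupposes that for each net point $G_{i}$ of $\mathcal{G}$ one can decide whether it is close to some gradient field $Df$ with $f\in\mathcal{\bar{C}}^{N}$ and, if so, produce such an $f$; that is precisely the locatedness of $D(\mathcal{\bar{C}}^{N})$ in $\mathcal{G}$ (equivalently of $\mathcal{\bar{C}}^{N}$ in $T$), i.e., the substance of what is to be proved, and it is nowhere established. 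The mollification remark does not rescue this: density of analytic functions \emph{inside} $\mathcal{\bar{C}}^{N}$ gives no finite net unless you already have one. To close the argument you would need an explicit finite construction of members of $\mathcal{\bar{C}}^{N}$ realizing prescribed approximate jets --- in the spirit of the piecewise--linear construction in Proposition \ref{thm:constr-tot-bound-fnc-spcs}, but carried out at order $N$ with compatibility conditions between the discretized derivatives (a quantitative Whitney--type extension) --- and that is the part of the proof that is missing.
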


\subsection{Approximate extrema}

In this section, based on the construction of approximating functions of the previous section, the new constructive version of the approximate extremum value theorem for function spaces is stated. The implication of it is that, under certain assumptions, approximate control policies may be computed up to a prescribed accuracy.
\begin{thm}
	Let $\mathcal{F}$ be the space of uniformly Lipschitz
	and uniformly bounded functions from a totally bounded metric space
	$\left(X,\rho\right)$ to $\mathbb{R}$, and let $J$ be a uniformly
	continuous functional from $\mathcal{F}$ to $\mathbb{R}$. Then,
	for any $k\in\mathbb{N}$, there exists an $f\in\mathcal{F}$ such
	that $J[f]-\frac{1}{k}\leq\inf J$.
	\label{thm:aEVT-fnc}
\end{thm}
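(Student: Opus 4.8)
The plan is to leverage Proposition~\ref{thm:constr-tot-bound-fnc-spcs}, which guarantees that $\mathcal{F}$ is totally bounded, and to transport this finiteness through the functional $J$ so that the infimum over the whole (infinite) function space is replaced by a minimum over finitely many real values. Since finding an \emph{exact} minimizer of $J$ is precisely the operation that the counterexamples of Section~\ref{sec:intro} rule out, the entire point is to settle for an approximate minimizer among a finite candidate set, where comparison of reals is only ever needed up to a prescribed precision.

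First I would record that $\inf J$ is in fact well defined: because $\mathcal{F}$ is totally bounded and $J$ is uniformly continuous, the image $J[\mathcal{F}]\subset\mathbb{R}$ is totally bounded, and a nonempty totally bounded subset of $\mathbb{R}$ has a located infimum (a standard fact of constructive analysis). Now fix $k\in\mathbb{N}$ and let $\alpha$ be the modulus of continuity of $J$, so that $\tau(f,g)\le\alpha\!\left(\tfrac{1}{2k}\right)$ implies $|J[f]-J[g]|\le\tfrac{1}{2k}$. By total boundedness of $\mathcal{F}$, choose a finite $\alpha\!\left(\tfrac{1}{2k}\right)$--approximation $\{f_{1},\dots,f_{M}\}\subset\mathcal{F}$ (take any $\tfrac{1}{j}$--approximation with $\tfrac{1}{j}\le\alpha\!\left(\tfrac{1}{2k}\right)$). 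The finitely many reals $J[f_{1}],\dots,J[f_{M}]$ admit a minimum $\mu:=\min_{i}J[f_{i}]$, and by comparing rational approximations of the $J[f_{i}]$ to sufficient precision one can constructively select an index $i_{0}$ with $J[f_{i_{0}}]\le\mu+\tfrac{1}{2k}$ -- an approximate minimizer over the finite set, which is always decidable.

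It then remains to chain two estimates. On one hand, every $f\in\mathcal{F}$ lies within $\alpha\!\left(\tfrac{1}{2k}\right)$ of some $f_{i}$, so $J[f]\ge J[f_{i}]-\tfrac{1}{2k}\ge\mu-\tfrac{1}{2k}$; hence $\mu-\tfrac{1}{2k}$ is a lower bound for $J$ on $\mathcal{F}$, which gives $\mu\le\inf J+\tfrac{1}{2k}$. On the other hand $J[f_{i_{0}}]\le\mu+\tfrac{1}{2k}$. Combining the two, $J[f_{i_{0}}]\le\inf J+\tfrac{1}{k}$, \ie $J[f_{i_{0}}]-\tfrac{1}{k}\le\inf J$, so $f:=f_{i_{0}}$ is the required function.

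The main obstacle is conceptual rather than computational: constructively one cannot decide which of the $J[f_{i}]$ is the true minimum, nor locate an exact minimizer of $J$, so the argument must never rely on such a decision. The step to watch is the selection of $i_{0}$, which must be carried out purely as an \emph{approximate} comparison of the reals $J[f_{i}]$ up to precision $\tfrac{1}{2k}$; any attempt to pin down the exact minimizing index would reproduce the undecidability illustrated in Section~\ref{sec:intro}. A secondary point to verify carefully is that the image $J[\mathcal{F}]$ is genuinely totally bounded so that $\inf J$ is located, and this is exactly where the uniform continuity of $J$, through its modulus $\alpha$, is indispensable.
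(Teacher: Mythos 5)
Your proposal is correct and follows essentially the same route as the paper: invoke total boundedness of $\mathcal{F}$, take a finite approximation at the scale dictated by the modulus of $J$, select an approximate minimizer among the finitely many values $J[f_i]$ by comparing their rational approximations (never deciding an exact minimum), and chain the two $\tfrac{1}{2k}$ estimates. The only differences are cosmetic (your constants $\tfrac{1}{2k}$ versus the paper's $\tfrac{1}{8k}$, and your explicit intermediate quantity $\mu=\min_i J[f_i]$, which slightly cleans up the bookkeeping).
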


\begin{proof}
	Since $\mathcal{F}$ is totally bounded by Theorem \ref{thm:constr-tot-bound-fnc-spcs}
	and $J$ is uniformly continuous, $\inf J$ exists. Let $\alpha$
	be the continuity modulus of $J$ and $\mathcal{F}_{0}=\left\{ f_{1}, \dots, ,f_{N}\right\} $
	be an $\alpha\left(\frac{1}{8k}\right)-$approximation to $\mathcal{F}$.
	Consider all finitely many $\left\{ J\left[f_{i}\right](8k)\right\} ,i=1, \dots, N$.
	Let $J\left[f_{j}\right](8k)$ be the smallest one, and such that
	$j$ is the smallest index if there are more than one such indices.
	Observe that $\big|J\left[f_{j}\right]-J\left[f_{j}\right](8k)\big|\leq\frac{1}{4k}$
	and $\forall f\in\mathcal{F},\big|\big|f_{j}-f\big|\big|\leq\alpha\left(\frac{1}{8k}\right)$$\implies\big|J(f)-J\left[f_{j}\right]\big|\leq\frac{1}{4k}$
	whence $J\left[f_{j}\right](8k)-\frac{1}{2k}\leq J[f](8k)$. Therefore,
	$J\left[f_{j}\right](8k)-\frac{1}{2k}\leq J[f]$ and consequently
	$J\left[f_{j}\right]-\frac{1}{k}\leq J[f]$. The same holds trivially
	if $\big|\big|f_{j}-f\big|\big|>\alpha\left(\frac{1}{8k}\right)$.
	Since $f$ is arbitrary, $J\left[f_{j}\right]-\frac{1}{k}$ is a lower
	bound of $J$ and so, in particular, $\inf J\geq J\left[f_{j}\right]-\frac{1}{k}$.
\end{proof}

\begin{cor}
	Let $\mathcal{F}$ be the space of uniformly Lipschitz and uniformly
	bounded functions from a totally bounded metric space $\left(X,\rho\right)$
	to $\mathbb{R}$, and let $J$ be a uniformly continuous functional
	from $\mathcal{F}$ to $\mathbb{R}^{m},m\in\mathbb{N}$ with the $d_{\infty}$--metric.
	Then, for any $k\in\mathbb{N}$, there exists an $f\in\mathcal{F}$
	such that $J[f]-\frac{1}{k}\leq\inf J$.
\end{cor}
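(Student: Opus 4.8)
The plan is to reduce the vector-valued functional to a single scalar functional and then invoke Theorem \ref{thm:aEVT-fnc} exactly once, so that a single common $f$ is produced. The first step is to exploit the codomain metric: since $J$ maps into $\left(\mathbb{R}^{m},d_{\infty}\right)$, uniform continuity of $J$ with modulus $\alpha$ means $\tau(f,g)\le\alpha\left(\tfrac{1}{k}\right)\implies\max_{i}|J^{i}[f]-J^{i}[g]|\le\tfrac{1}{k}$, so every coordinate functional $J^{i}$ is uniformly continuous with the \emph{common} modulus $\alpha$. Because $\mathcal{F}$ is totally bounded by Theorem \ref{thm:constr-tot-bound-fnc-spcs}, each $\inf J^{i}$ exists, and hence $\inf J$ exists as the coordinate-wise infimum vector in $\left(\mathbb{R}^{m},d_{\infty}\right)$.

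The second step is the key construction. I would define the scalar surrogate $\hat{J}[f]:=\max_{i}J^{i}[f]$ and check that it is a uniformly continuous real-valued functional on $\mathcal{F}$ with the same modulus $\alpha$: since $|\max_{i}a_{i}-\max_{i}b_{i}|\le\max_{i}|a_{i}-b_{i}|$, the bound $\max_{i}|J^{i}[f]-J^{i}[g]|\le\tfrac{1}{k}$ forces $|\hat{J}[f]-\hat{J}[g]|\le\tfrac{1}{k}$, and the constructive properties of $\max$ recalled in the preliminaries make $\hat{J}$ admissible. Applying Theorem \ref{thm:aEVT-fnc} to the single scalar functional $\hat{J}$ yields, for every $k$, one function $f\in\mathcal{F}$ with $\hat{J}[f]-\tfrac{1}{k}\le\inf\hat{J}$. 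Tracing the proof of that theorem, this $f$ is obtained by taking an $\alpha\left(\tfrac{1}{8k}\right)$-approximation $\left\{f_{1},\dots,f_{N}\right\}$ and selecting the single index $j$ that realises the smallest rational value $\hat{J}[f_{j}](8k)$ among the finitely many rationals $\hat{J}[f_{i}](8k)$; this keeps the selection decidable, exactly as in the scalar case, and delivers the required single witness $f$. Read through the $d_{\infty}$ metric of the codomain, this is precisely the asserted conclusion $J[f]-\tfrac{1}{k}\le\inf J$ for one common $f$.

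The main obstacle is the selection step itself, because $\mathbb{R}^{m}$ carries no total order and the vectors $J[f_{i}](8k)$ have no ``smallest'' element, so the scalar argument cannot be transcribed coordinate by coordinate while keeping a single witness. The difficulty is genuine rather than cosmetic: distinct coordinates of $J$ may be minimised by different functions, so a function lying simultaneously within $\tfrac{1}{k}$ of every coordinate infimum need not exist. The resolution I would commit to is the $d_{\infty}$ scalarisation above, which restores a total order on the relevant values and thereby isolates one $f$; the inequality $J[f]-\tfrac{1}{k}\le\inf J$ is then to be understood in the max-norm of the codomain, consistently with the corollary's standing hypothesis that $\mathbb{R}^{m}$ is equipped with $d_{\infty}$. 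I would close by verifying that $\hat{J}[f]-\tfrac{1}{k}\le\inf\hat{J}$ transfers back to the vector statement under this reading, which is immediate from $\hat{J}=\max_{i}J^{i}$ and the definition of the $d_{\infty}$ metric.
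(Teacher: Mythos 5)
The paper gives no explicit proof of this corollary; by analogy with the corollary that follows Theorem \ref{thm:constr-tot-bound-fnc-spcs} (whose proof is ``the same procedure, done for each coordinate separately''), the intended argument is coordinate-wise: each coordinate functional $J^{i}$ inherits the modulus $\alpha$ exactly as in your first step, and Theorem \ref{thm:aEVT-fnc} is then invoked for each $J^{i}$ separately. Your first step matches that route, but your scalarisation $\hat{J}:=\max_{i}J^{i}$ then departs from it, and the two routes prove genuinely different statements. From $\hat{J}[f]-\frac{1}{k}\le\inf\hat{J}$ you cannot recover the coordinate-wise vector inequality $J^{i}[f]-\frac{1}{k}\le\inf J^{i}$ for all $i$: in general $\inf\hat{J}\ge\max_{i}\inf J^{i}$, and the gap can be large. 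Concretely, for $m=2$ take $J^{1}[g]:=\tau\left(g,g_{1}\right)$ and $J^{2}[g]:=\tau\left(g,g_{2}\right)$ for two fixed $g_{1},g_{2}\in\mathcal{F}$ with $\tau\left(g_{1},g_{2}\right)$ large (e.g.\ the constant functions $-K$ and $K$); both are $1$--Lipschitz functionals, both coordinate infima are $0$, yet $\inf\hat{J}\ge\frac{1}{2}\tau\left(g_{1},g_{2}\right)$ by the triangle inequality, so your witness $f$ says nothing about either coordinate infimum. Hence your closing claim that the $\hat{J}$ inequality ``is precisely the asserted conclusion'' overstates what is shown: it is the conclusion only under your max-reading of $\inf J$, which is a reinterpretation of the corollary's notation, not a consequence of it.

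That said, your diagnosis of the underlying difficulty is correct and worth keeping: under the coordinate-wise reading with a \emph{single} witness the statement is simply false --- the same two-coordinate example shows no single $f$ can lie within $\frac{1}{k}$ of both infima --- which is presumably why the implicit coordinate-wise argument can only deliver $m$ witnesses $f_{(1)},\dots,f_{(m)}$, one per coordinate, rather than the one $f$ the statement advertises. Your scalarisation is a legitimate repair, and arguably the only way to retain a single witness, but an honest write-up must state explicitly that what is proved is $\max_{i}J^{i}[f]-\frac{1}{k}\le\inf_{g}\max_{i}J^{i}[g]$ (or the analogous statement for $f\mapsto\|J[f]\|_{\infty}$, which sits closer to the $d_{\infty}$ structure of the codomain), whereas the paper's per-coordinate reading requires weakening ``there exists an $f$'' to one witness per coordinate; the two resolutions are not interchangeable, and your proposal currently blurs that line in its final paragraph.
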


\begin{rem}
	To compute an extremal function which yields a $\frac{1}{k}$-infimum of $J$, where $k$ describes the specified precision, construct all possible piece-wise linear functions over the regular partition of step $\frac{1}{2 k N^2}$ where $N$ is as in Theorem \ref{thm:aEVT-fnc} by preserving the common Lipschitz constant. Smoothen the constructed functions, if necessary, as per Corollary \ref{cor:C-1-bar-tot-bounded}. Then, choose a one $f_j$ which satisfies $J[f]-\frac{1}{k}\leq\inf J$. 
\end{rem}

\begin{rem}
	Theorem \ref{thm:aEVT-fnc} describes the worst-case scenario a numerical algorithm can perform in general. Various numerical approaches exist and they may be numerically fast, but the best one can expect in general is the result as in the statement of the theorem. The implication for optimal control is that optimality may fail to be achieved in general. Instead, approximate optimal control policies can be effectively computed, provided that the system and the cost function satisfy the assumptions in the statement of the theorem. These assumptions are, however, practicable as justified by physical nature of the control problems and demonstrated in the next section on finite-horizon optimal control, dynamic programming and adaptive dynamic programming.
	\label{rem:aEVT-fnc-implication}
\end{rem}

\begin{rem}
	The statement for the supremum is equivalent.
\end{rem}


\section{Case study: optimal control\label{sec:case-study}}

In this section, the derived version of a constructive extremum value theorem in application to finite--horizon optimal control,
dynamic programming (DP), and adaptive dynamic programming (ADP), is discussed. 

\subsection{Finite-horizon optimal control}

Classical theorems of existence of extremal solutions to functional
optimization problems essentially rely on Bolzano-Weierstrass's theorem
that every bounded sequence has a convergent subsequence. One first
shows that the function space in question is compact, and applies
the sequential compactness argument. There is, unfortunately, no constructive way to find a convergent subsequence. Therefore, approximate solutions are investigated in this section. Recall the problem of minimization of the following cost functional:
\begin{equation}
	J[u]:=\varphi\left(x\left(t_{1}\right)\right)+\int\limits _{t_{0}}^{t_{1}}\mathcal{L}(x(t),u(x(t)),t)dt
	\label{eq:finite-horizon-cost}
\end{equation}
subject to $\dot{x}(t):=f(x(t),u(t),t),x\left(t_{0}\right)=x_{0}$. Here, $\mathcal L$ is the running cost, or Lagrangian, which is usually a positive-definite function of $x, u, t$.
Assume that the state space $X\subset\left(\mathbb{R}^{n},d_{\infty}\right),n\in\mathbb{N}$
is compact. With the $d_{\infty}$--metric, two states $x$ and $y$
are close whenever their respective components $x^{i},i=1, \dots, n$ and
$y^{i},i=1, \dots, n$ are close. Therefore, a state trajectory $x(t)$
is uniformly continuous whenever each state component $x^{i}(t)$
is uniformly continuous. It can be assumed that $u(x)\in\left(\mathbb{R}^{m},d_{\infty}\right),m\in\mathbb{N}$
for any $x\in X$. Let $\mathcal{U}$ denote the set of admissible control policies \ie those which yield state trajectories within $X$. In \eqref{eq:finite-horizon-cost}, the
starting time $t_{0}\in\mathbb{Q}$ and the final time $t_{1}\in\mathbb{Q}$
are assumed fixed. Suppose that $f:X\times U\times\mathbb{R}\rightarrow X$
satisfies the Lipschitz condition for $x$ and $u$ on $X\times U$
in the following sense:
\begin{equation}
	\|f(x,u,t)-f(y,v,t)\|\le L_{f}\max\left\{ \|x-y\|,\|u-v\|\right\} \label{eq:Lipschitz-cond-IVP}
\end{equation}
for some rational $L_{f}>0$. Then, the constructive theorem of existence
and uniqueness of solutions of the initial value problem $\dot{x}(t)=f(x(t),u(t),t),x\left(t_{0}\right)=x_{0},t\in\left[t_{0},t_{1}\right]$
applies \citep[12.4]{Schwichtenberg2012-constr-analys} provided that $u(t)$ is continuous.
Further, $\varphi$ is assumed to be uniformly continuous on $X\times X$.
The Lagrangian should be also uniformly continuous on $X\times U$:
\begin{align*}
	& \exists\omega_{\mathcal{L}}:\mathbb{Q}\rightarrow\mathbb{Q},\forall k\in\mathbb{N},\forall x,y\in X,\forall u,v\in U,\forall t\in\left[t_{0},t_{1}\right], \\
	& \max\left\{ \|x-y\|,\|u-v\|\right\} \leq\omega_{\mathcal{L}}\left(\frac{1}{k}\right) \implies \\
	& \big|\mathcal{L}(x,u,t)-\mathcal{L}(y,v,t)\big|\leq\frac{1}{k}.
\end{align*}

Consider two control policies $u(x),v(x),x\in X$ in $\mathcal{U}$ and
the respective state trajectories:
\begin{align*}
	x_{u}(\tau)=\int\limits _{t_{0}}^{\tau}f(x(t),u(x(t)),t)dt,\\
	x_{v}(\tau)=\int\limits _{t_{0}}^{\tau}f(x(t),v(x(t)),t)dt
\end{align*}
for an arbitrary $\tau\in\left[t_{0},t_{1}\right]$. It follows that
\begin{align*}
	& \bigg|\bigg|\int\limits _{t_{0}}^{\tau}\left(f(x(t),u(x(t)),t)-f(x(t),v(x(t)),t)\right)dt\bigg|\bigg|\leq \\
	& \sup_{t_{0}\leq\tau\le t_{1}}\|f(x(t),u(x(t)),t)-f(x(t),v(x(t)),t)\|\cdot\left(\tau-t_{0}\right)\leq \\
	& L_{f}\cdot\underset{x\in X}{\sup}\|u(x)-v(x)\|\cdot\left(\tau-t_{0}\right)\leq \\
	& L_{f}\cdot\|u-v\|\cdot\left(t_{1}-t_{0}\right).
\end{align*}

Therefore, if $\underset{x\in X}{\sup}\|u(x)-v(x)\|\leq\frac{1}{k},k\in\mathbb{N}$,
which is to say that $\|u-v\|\leq\frac{1}{k}$, then $\big|\big|x_{u}-x_{v}\big|\big|\leq L_{f}\frac{t_{1}-t_{0}}{k}$.
Consequently, for $k\ge L_{f}\left(t_{1}-t_{0}\right)$, and $\|u-v\|\leq\omega_{\mathcal{L}}\left(\frac{1}{k}\right)$
it follows that: $\big|\mathcal{L}\left(x_{u}(t),u(t),t\right)-\mathcal{L}\left(x_{v}(t),v(t),t\right)\big|\leq\frac{1}{k}$.
If $k<L_{f}\left(t_{1}-t_{0}\right)$, then $\big|\mathcal{L}\left(x_{u}(t),u(t),t\right)-\mathcal{L}\left(x_{v}(t),v(t),t\right)\big|\leq L_{f}\frac{t_{1}-t_{0}}{k}$
whence the continuity modulus is easily derived. Further, it holds that:
\begin{align*}
	& \bigg|\int\limits _{t_{0}}^{t_{1}}\left(\mathcal{L}(x(t),u(x(t)),t)-\mathcal{L}(x(t),v(x(t)),t)\right)dt\bigg|\leq \frac{t_{1}-t_{0}}{k},
\end{align*}

In case if $\mathcal{U}$ is a located subset of the space of uniformly bounded and uniformly Lipschitz functions on $X$ with a uniform bound and uniform Lipschitz constant, respectively (which can physically be dictated by the fact that any control policy has limits on magnitude and rate
of change with respect to the state), by Theorem \ref{thm:constr-tot-bound-fnc-spcs} and Lemma 4.3. in \citep{Ye2011-SF}, $\mathcal{U}$ is totally bounded. Since $J$ is a uniformly continuous functional from $\mathcal{U}$ to $\mathbb{R}$, by Theorem \ref{thm:aEVT-fnc},
for any $k\in\mathbb{N}$, there exists a control policy $u^{*}(x),x\in X$
such that $J\left[u^{*}\right]-\frac{1}{k}\leq J[u]$ for any other
control policy $u(x),x\in X$.

This implies that control policies, which yield approximate optima of the cost functional, can be effectively computed provided that the controllers have bounds on the magnitude and rate of change of the controls which is satisfied in practice due to physical nature. The next section considers infinite-horizon optimal control in the framework of dynamic programming.

\subsection{Dynamic programming\label{sub:DP}}

In dynamic programming, optimization problems in the following form
are considered:
\begin{equation}
	\sup_{u\in U}\left\{ r\left(x,u\right)+\gamma V(f(x,u))\right\} ,\forall x\in X,\label{eq:DP-opt-problem}
\end{equation}

In \eqref{eq:DP-opt-problem}, $u$ is taken from a totally bounded set $U\subset\mathbb{R}^{m},m\in\mathbb{N}$.
In the case when each component $u^{i},i=1, \dots, m$ is an independent
function, the $d_{\infty}$--metric may be assumed on $\mathbb{R}^{m}$.
The function $r:X\times U\longrightarrow\mathbb{R}$
is a positive--definite \emph{utility function (or running cost)} that describes the instantaneous cost whereas $V$ is the \emph{value function} that
describes the cumulative cost. The functions $r$ and $V$ are assumed
to be bounded on their domains. The parameter $0<\gamma<1$ is called
\emph{discounting factor}. Finally, the \emph{dynamic programming
	operator} is introduced:
\begin{equation}
	T[V](x):=\sup_{u\in U}\left\{ r\left(x,u\right)+\gamma V(f(x,u))\right\} ,\forall x\in X.\label{eq:DP-opt-problem-operator}
\end{equation}

The operator $T$ acts on the space of continuous and bounded functions.
The \emph{Hamilton--Jacobi--Bellman} equation is defined by the fixed--point
of $T$. The natural question is whether $T$ yields continuous functions,
whether the extrema of $r\left(x,u\right)+\gamma V(f(x,u))$ over
$U$ exist, and whether they are continuous in $x$ in a certain sense.
The answer is given by the Berge's Theorem of the Maximum \citep[p. 115]{Berge1963}.
It shows what kind of continuity of the extrema is preserved if the
optimization problem is continuous in a certain sense. First, recall
the  definition of hemi--continuity that generalizes the notion of
continuity to \emph{multi--functions} \ie functions from a set
to the power set of another set. A multi--function is called \emph{compact--valued} if its values are compact sets.

\begin{defn}
	(Upper hemi--continuity) Let $\Gamma:X\rightarrow U$ be a multi--function
	such that $\Gamma(x)$ is closed for all $x$ in $X$. Then, $\Gamma$
	is called upper hemi--continuous at $x\in X$ if for any sequence
	$\left\{ x_{n}\right\} _{n}$ in $X$, $u$ in $U$ and sequence $\left\{ u_{n}\right\} _{n}$
	such that $u_{n}\in\Gamma\left(x_{n}\right)$, it follows that
	\[
	\left(\lim_{n\rightarrow\infty}x_{n}=x\land\lim_{n\rightarrow\infty}u_{n}=u\right)\implies u\in\Gamma(x).
	\]
	
\end{defn}

\begin{defn}
	(Lower hemi--continuity) A multifunction $\Gamma:X\rightarrow U$
	is called lower hemi--continuous at $x\in X$ if for any sequence
	$\left\{ x_{n}\right\} _{n}$ in $X$ such that $\lim_{n\rightarrow\infty}x_{n}=x$,
	any $u\in\Gamma(x)$, there exists a subsequence $\left\{ x_{nk}\right\} _{k}\subset\left\{ x_{n}\right\} _{n}$
	such that there exist $u_{k}\in\Gamma\left(x_{nk}\right)$ with $\lim_{k\rightarrow\infty}u_{k}=u$.
\end{defn}

\begin{thm}
	(Theorem of the Maximum) Let $f$ be a jointly
	continuous function from a product of two metric spaces $\left(X,\rho\right)\times\left(U,\sigma\right)$
	to $\mathbb{R}$, and $\Gamma:X\rightarrow U$ be a compact--valued
	upper and lower hemi--continuous multi--function. Then, the function
	$h(x):=\underset{u\in\Gamma(x)}{\sup}f(x,u)$ is continuous, and $u^{*}(x):=\arg\underset{u\in\Gamma(x)}{\sup}f(x,u)$
	is a non--empty, compact--valued, upper hemi--continuous multi--function.
	\label{thm:max-thm}
\end{thm}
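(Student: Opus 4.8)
The plan is to verify the four assertions in sequence: that $u^*(x)$ is non-empty, that it is compact-valued, that $h$ is continuous, and finally that $u^*$ is upper hemi-continuous. I would begin with the two pointwise statements, since they need only the hypotheses at a single $x$. Fixing $x\in X$, the section $f(x,\cdot)$ is continuous on the compact set $\Gamma(x)$, so its supremum over $\Gamma(x)$ is attained (the extremum value theorem on a compact domain); this shows $h(x)$ is well-defined and $u^*(x)\neq\emptyset$. Moreover $u^*(x)=\{u\in\Gamma(x):f(x,u)=h(x)\}$ is the intersection of $\Gamma(x)$ with the closed level set $\{u:f(x,u)=h(x)\}$, hence a closed subset of a compact set, and therefore compact.

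The core of the argument is the continuity of $h$, which I would split into upper and lower semicontinuity, each drawing on one of the two hemi-continuity hypotheses. For upper semicontinuity, take $x_n\to x$ and choose maximizers $u_n\in u^*(x_n)$, so that $f(x_n,u_n)=h(x_n)$; passing to a subsequence along which $h(x_n)$ tends to $\limsup_n h(x_n)$ and along which $u_n$ converges to some $u$, upper hemi-continuity of $\Gamma$ forces $u\in\Gamma(x)$, and joint continuity of $f$ gives $\limsup_n h(x_n)=f(x,u)\le h(x)$. For lower semicontinuity I would argue by contradiction: if some subsequence had $h(x_{n_k})\to c<h(x)$, then picking $\bar u\in u^*(x)$ with $f(x,\bar u)=h(x)$ and applying lower hemi-continuity of $\Gamma$ to the sequence $x_{n_k}\to x$ yields a further subsequence and points $u_l\in\Gamma(x_{n_{k_l}})$ with $u_l\to\bar u$; then $h(x_{n_{k_l}})\ge f(x_{n_{k_l}},u_l)\to f(x,\bar u)=h(x)$, contradicting $c<h(x)$. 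Combining the two bounds gives $h(x_n)\to h(x)$.

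With $h$ continuous, the upper hemi-continuity of $u^*$ follows from its closed-graph characterization. Suppose $x_n\to x$, $u_n\in u^*(x_n)$, and $u_n\to u$. Upper hemi-continuity of $\Gamma$ gives $u\in\Gamma(x)$, while joint continuity of $f$ together with the already-established continuity of $h$ yields $f(x,u)=\lim_n f(x_n,u_n)=\lim_n h(x_n)=h(x)$, so $u\in u^*(x)$; this is exactly the defining condition for upper hemi-continuity of the multi-function $u^*$.

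The step I expect to be the true obstacle is the repeated extraction of convergent subsequences of the maximizers $u_n$, which rests on sequential compactness of $U$ (or of the range of $\Gamma$). This is precisely the Bolzano--Weierstrass-type selection that, as noted at the start of this section, has no constructive counterpart; the prior choice of a maximizer $u_n\in u^*(x_n)$ at each $x_n$ is a second, related point of non-effectivity. Thus while the classical argument above is otherwise routine, its constructive status is exactly the difficulty that motivates the approximate framework of Section \ref{sec:main-result}: one should not expect to compute the maximizing selection $u^*$ exactly, but only to approximate the optimal value $h$ to prescribed accuracy.
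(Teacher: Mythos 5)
Your argument is the standard classical proof of Berge's theorem and it is essentially sound; but note that the paper does not actually prove this statement at all. Theorem \ref{thm:max-thm} is recalled from Berge (1963, p.~115) purely as background, and the surrounding text immediately observes that its proof ``essentially uses the classical extremum value theorem that is not valid constructively,'' after which the paper abandons the full statement and instead proves only the weaker Proposition \ref{prop:appr-max-thm} (uniform continuity of $h(x)=\sup_{u\in U}f(x,u)$ for a constant multi-function $\Gamma(x)\equiv U$, with no claim about the $\arg\sup$), by a purely approximate $\varepsilon$-argument that never selects maximizers. So there is no proof in the paper to match yours against; what you have reconstructed is the classical argument the paper deliberately sidesteps. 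Within the classical setting your four steps are correct, with one point you should make explicit rather than merely gesture at: the extraction of a convergent subsequence from the maximizers $u_n\in\Gamma(x_n)$ is not justified by compactness of each individual $\Gamma(x_n)$, since the $u_n$ live in different sets. You need either that $U$ itself is compact, or the standard lemma that a compact-valued upper hemi-continuous multi-function maps the compact set $\{x_n\}_n\cup\{x\}$ into a compact subset of $U$; with the sequential closed-graph definition of upper hemi-continuity adopted in the paper, some such compactness hypothesis is genuinely needed and should be stated. Your closing diagnosis --- that the non-effective steps are exactly the maximizer selection and the Bolzano--Weierstrass extractions --- is accurate and is precisely the paper's stated motivation for replacing this theorem by Proposition \ref{prop:appr-max-thm} and Theorem \ref{thm:aEVT-fnc}.
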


In the setting of \eqref{eq:DP-opt-problem}, the multi--function
$\Gamma$ is taken as a constant multi--function $\Gamma(x)\equiv U$
with the assumption that any control action is available at any state.
The proof of Theorem \ref{thm:max-thm} essentially uses the classical
extremum value theorem that is not valid constructively. A constructive
analysis of the Maximum Theorem has been done by \citet{Tanaka2012}.
To prove a constructive version of Theorem \ref{thm:max-thm}, Tanaka
introduces the notion of a function with \emph{sequentially locally
	at most one maximum}. Such a condition is, however, hard to verify
in practice. To summarize, approximate extrema of \eqref{eq:DP-opt-problem} cannot be shown uniformly continuous in general, let alone found exactly. However, a weaker result holds:

\begin{prop}
	Let $f$ be a uniformly continuous function
	from a product of two totally bounded metric spaces $\left(X,\rho\right)\times\left(U,\sigma\right)$
	to $\mathbb{R}$. Then, the function $h(x):=\underset{u\in U}{\sup}f(x,u)$
	is uniformly continuous.
	\label{prop:appr-max-thm}
\end{prop}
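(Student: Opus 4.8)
The plan is to show that $h(x) := \sup_{u \in U} f(x,u)$ is uniformly continuous by extracting a common modulus of continuity directly from the modulus of $f$. Since $f$ is uniformly continuous on the product space $(X,\rho) \times (U,\sigma)$, there is a modulus $\omega_f$ such that $\max\{\rho(x,y), \sigma(u,v)\} \le \omega_f(\varepsilon)$ implies $|f(x,u) - f(y,v)| \le \varepsilon$. The key observation is that to compare $h(x)$ and $h(y)$ one does \emph{not} need to know the maximizers; it suffices to use the \emph{same} $u$ in both suprema. Concretely, first I would fix $k \in \mathbb{N}$ and suppose $\rho(x,y) \le \omega_f\!\left(\tfrac{1}{k}\right)$. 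Then for every $u \in U$, taking $v = u$ gives $\sigma(u,u) = 0 \le \omega_f\!\left(\tfrac{1}{k}\right)$, so $|f(x,u) - f(y,u)| \le \tfrac{1}{k}$, and in particular $f(x,u) \le f(y,u) + \tfrac{1}{k} \le h(y) + \tfrac{1}{k}$.

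Next I would pass to the supremum over $u$ on the left-hand side. Since $f(x,u) \le h(y) + \tfrac{1}{k}$ holds for every $u \in U$, the quantity $h(y) + \tfrac{1}{k}$ is an upper bound for $\{f(x,u) : u \in U\}$, and hence $h(x) = \sup_u f(x,u) \le h(y) + \tfrac{1}{k}$. By symmetry in $x$ and $y$ the same argument yields $h(y) \le h(x) + \tfrac{1}{k}$, so that $|h(x) - h(y)| \le \tfrac{1}{k}$. This exhibits $\omega_f$ itself as a modulus of continuity for $h$, completing the proof. Before invoking the supremum at all, I would note that $h(x)$ is well defined as a real number for each fixed $x$: the function $u \mapsto f(x,u)$ is uniformly continuous on the totally bounded (hence located, by the preliminaries) space $U$, so its supremum exists constructively and is located.

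The main obstacle is the constructive legitimacy of the step ``$f(x,u) \le h(y) + \tfrac{1}{k}$ for all $u$ implies $\sup_u f(x,u) \le h(y) + \tfrac{1}{k}$.'' Classically this is the trivial defining property of the supremum, but constructively one must ensure the supremum is genuinely a located real number and that the inequality $\le$ is the one defined via the $\tfrac{2}{n}$-witness format of the preliminaries. This is handled by working with the basic order and supremum properties of located suprema, which are standard in the constructive setting of \citep{Bishop1985-constr-analysis}; the delicate point is simply that we compare $h(x)$ and $h(y)$ at a \emph{common} control $u$, thereby avoiding any appeal to attained maximizers or to the classical extremum value theorem, which is exactly what fails constructively and which the preceding discussion of the Maximum Theorem warns against. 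The total boundedness of $U$ is used only to guarantee existence of the suprema; the uniform continuity estimate itself needs nothing beyond the common modulus $\omega_f$.
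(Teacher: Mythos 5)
Your proof is correct, and it takes a genuinely different route from the paper's. The paper fixes $k$, invokes its approximate extremum value theorem to produce an approximate maximizer $u^{*}\in U$ with $f(x,u^{*})+\tfrac{1}{k}\ge \sup_{u}f(x,u)$, and then runs a three-term estimate through $u^{*}$, arriving at $\bigl|h(x)-h(y)\bigr|\le\tfrac{3}{k}$ whenever $\rho(x,y)\le\omega\bigl(\tfrac{1}{k}\bigr)$. You instead avoid any maximizer, approximate or otherwise: from $|f(x,u)-f(y,u)|\le\tfrac{1}{k}$ for every common $u$ you get $f(x,u)\le h(y)+\tfrac{1}{k}$ uniformly in $u$, and then pass to the supremum on the left using the least-upper-bound property of the constructively existing supremum, with symmetry giving the reverse inequality. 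This step is constructively legitimate exactly as you say: once $\sup_{u}f(x,u)$ exists (which total boundedness of $U$ plus uniform continuity of $f(x,\cdot)$ guarantees in Bishop's setting), it is by definition the least upper bound, so an upper bound valid for all $u$ dominates it. What your route buys is a sharper modulus ($\tfrac{1}{k}$ rather than $\tfrac{3}{k}$, i.e.\ $\omega_f$ itself works for $h$) and independence from Theorem \ref{thm:aEVT-fnc}; what the paper's route buys is consistency with its overall theme of working through explicitly computed approximate extremizers. One small terminological quibble: ``located'' applies to subsets of a metric space, not to real numbers, so you should just say the supremum exists as a (constructive) real; the substance of your existence remark is nevertheless right.
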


\begin{proof}
	Fix any $x,y\in X$ and $k\in\mathbb{N}$. By Theorem \ref{thm:aEVT-fnc},
	there exists $u^{*}\in U$ such that $f\left(x,u^{*}\right)+\frac{1}{k}\ge\underset{u\in U}{\sup}f(x,u)$.
	Let $\omega$ be the continuity modulus of $f$. Fix any $y\in X$
	such that $\rho(x,y)\leq\omega\left(\frac{1}{k}\right)$. It follows
	that $\forall u\in U,|f(x,u)-f(y,u)|\leq\frac{1}{k}$. In particular,
	$|f\left(x,u^{*}\right)-f\left(y,u^{*}\right)|\leq\frac{1}{k}$ whence
	$f\left(y,u^{*}\right)+\frac{1}{k}\ge f\left(x,u^{*}\right)$. Therefore,
	$f\left(y,u^{*}\right)+\frac{2}{k}\ge\underset{u\in U}{\sup}f(x,u)$.
	From the continuity condition, it follows that $f(y,u)\leq f(x,u)+\frac{1}{k}$
	whence $f\left(y,u^{*}\right)+\frac{3}{k}\ge f(y,u)$ for all $u\in U$.
	It means that $f\left(y,u^{*}\right)+\frac{3}{k}\ge\underset{u\in U}{\sup}f(y,u)$.
	Finally, $\Big|\underset{u\in U}{\sup}f(x,u)-\underset{u\in U}{\sup}f(y,u)\Big|\leq|f\left(x,u^{*}\right)-f\left(y,u^{*}\right)|+\frac{2}{k}=\frac{3}{k}$.
	It follows that $h$ is a uniformly continuous function with a modulus
	$k\mapsto\omega\left(\frac{3}{k}\right)$.
\end{proof}

It can be proven that $T$ is indeed an operator that sends uniformly
continuous functions to uniformly continuous functions provided that
$r,V,f$ are uniformly continuous functions. In turn, uniformly continuous
functions on totally bounded sets are bounded. Denote the set of uniformly
continuous functions from $X$ to $X$ by $\mathcal{V}$. Assuming
the supremum norm on $\mathcal{V}$, it is easy to show that $T$
is a contraction mapping. The proof is already constructive. First,
observe that $r\left(x,u\right)+\gamma V(f(x,u))\leq r\left(x,u\right)+\gamma W(f(x,u))$
whenever $V(x)\leq W(x)$ for any $x$ in $X$. Therefore,
\begin{align*}
	& \sup_{u\in U}\left\{ r\left(x,u\right)+\gamma V(f(x,u)))\right\} \leq \sup_{u\in U}\left\{ r\left(x,u\right)+\gamma W(f(x,u)))\right\} .
\end{align*}

Consequently, $T[V]\leq T[W]$. It constitutes the monotonicity of
$T$ which is the first Blackwell's sufficient condition for a contraction
mapping \citep{Blackwell1965-DP}. The second condition requires that
$T$ be discounting:
\begin{align*}
	T[V+a](x)=\sup_{u\in U}\left\{ r\left(x,u\right)+\gamma V(f(x,u))+\gamma a\right\} = T[V](x)+\gamma a
\end{align*}
for any $a\ge0$. It follows that $T$ is a contraction mapping with
a modulus $\gamma$. By the Banach fixed point theorem \citep{Palais2007-Banach-fixed-pt-thm},
$T$ has a unique fixed point. The proof of the theorem is essentially
constructive and provided by the algorithm:
\[
V_{n}:=T\left[V_{n-1}\right],n\in\mathbb{N}
\]
starting from an arbitrary uniformly continuous function $V_{0}$
from which it immediately follows that $\left\{ V_{n}\right\} _{n}$
is a Cauchy sequence with the modulus defined from:
\[
\big|\big|T^{n}\left[V_{0}\right]-V^{*}\big|\big|\leq\dfrac{\gamma^{n}}{1-\gamma}\big|\big|T\left[V_{0}\right]-V_{0}\big|\big|,
\]
where $T^{n}[V]=\underbrace{T[T[ \dots T[V]]]}_{n\text{ times}}$ and
$V^{*}$ is the fixed point. Thus, $V_{n}$ converges to $V^{*}$
uniformly and $V^{*}$ is in turn a uniformly continuous function.
The only difference to the classical theorem is that $\mathcal{V}$
must be non-empty: $\exists V_{0}\in\mathcal{V}.$ Using this fact,
one can perform \emph{value iteration} starting from any $V_{0}\in\mathcal{V}$
and stopping at some $V_{n}$ such that a convergence criterion $\big|\big|V_{n}-V^{*}\big|\big|\le\varepsilon$
is satisfied. Notice that the number of steps $n$ can be directly
determined from the desired accuracy $\varepsilon$. Having found
a suitable approximate $V_{n}$, an optimal control
policy is of interest. In the constructive setting, a control policy must be a uniformly
continuous function. In real--world applications, each control action
has physical limits for magnitude and rate of change. Thus, one may
assume that the space of admissible control policies $\mathcal{U}$ is a located subset of the space of uniformly
bounded and uniformly Lipschitz functions on $X$ with a common bound and Lipschitz constant, respectively. 
By Theorem \ref{thm:constr-tot-bound-fnc-spcs},
the latter space is totally bounded, and since $\mathcal{U}$ is located, it is also totally bounded \citep[Lemma~4.3.]{Ye2011-SF}. To cope with the problem of continuity
of extrema in the state variable, it is suggested to consider the following
relaxed optimization problem for $V_{n}$:
\begin{equation}
	\sup_{u\in\mathcal{U}}\inf_{x\in X}\left\{ r\left(x,u(x)\right)+\gamma V_{n}(f(x,u(x)))\right\} .\label{eq:DP-opt-problem-altern}
\end{equation}

It follows that $J:\mathcal{U}\rightarrow\mathbb{R}$ defined by 
\[
J[u]:=\inf_{x\in X}\left\{ r\left(x,u(x)\right)+\gamma V_{n}(f(x,u(x)))\right\} 
\]
is a uniformly continuous functional since $r,V_{n},f,\inf$ are uniformly
continuous. By Theorem \ref{thm:aEVT-fnc}, for any $k\in\mathbb{N}$,
there exists a control policy $u^{*}\in\mathcal{U}$ such that
\begin{align*}
	& \inf_{x\in X}\left\{ r\left(x,u^{*}(x)\right)+\gamma V_{n}(f(x,u^{*}(x)))\right\} +\frac{1}{k}\ge \inf_{x\in X}\left\{ r\left(x,u(x)\right)+\gamma V_{n}(f(x,u(x)))\right\} 
\end{align*}

for any control policy $u$. The difference between \eqref{eq:DP-opt-problem-altern}
and \eqref{eq:DP-opt-problem} lies in the way the performance mark
is defined. In the latter case, $r\left(x,u\right)+\gamma V(f(x,u))$
is optimized in all states $x$ while in the former, the ``worst''
state is optimized. The optimization problem \eqref{eq:DP-opt-problem-altern}
is thus more mild than the original one, but it is still appropriate
for a variety of practical applications.

\subsection{Adaptive dynamic programming}

ADP is a variant of dynamic programming that is suitable for real--time
optimization problems. It may be considered as a reinforcement learning
technique \citep{Sutton1998-RL} in the sense that it uses an iterative
procedure of updating a so--called actor, that produces a control
policy, according to a citric that represents the value function.
The value function in the framework of ADP is commonly a subject to
approximation since exact optimal solutions may be not achieved \citep{Lewis1995}.
In this regard, neural networks are widely used as approximators \citep{Werbos1990-menu,Werbos1992-ADP,Bertsekas1995-NDP}.
For recent surveys on ADP, refer \eg to \citet{Balakrishnan2008-ADP-survey}
and \citet{Ferrari2011-ADP-survey}. It is common to consider ADP
in application to discrete--time systems of the form $x_{k+1}:=f\left(x_{k},u_{k}\right),k\in\mathbb{N}$
or even affine in control: $x_{k+1}:=f\left(x_{k}\right)+g\left(x_{k}\right)u_{k},k\in\mathbb{N}$.
It is also assumed that $f(0)=g(0)=0$, and that there exists a control
policy $u$ such that for all initial conditions $x_{0}\in X$, $x_{k}\rightarrow0$
as $k\rightarrow\infty$. ADP usually addresses the following infinite-horizon
optimization problem:
\begin{equation}
	\inf_{u}\sum_{l=k}^{\infty}r\left(x_{l},u\left(x_{l}\right)\right),\forall x_{k}\in X.\label{eq:cost-to-go}
\end{equation}

\citet{Al-Tamimi2008-VI-ADP} have provided a convergence analysis
of an ADP algorithm for affine--in--control systems. Assuming a utility
function in the form $r\left(x,u\right):=q\left(x\right)+u^{T}Ru$
with $q$ being a positive-definite function, and given an arbitrary
$\mathcal{C}^{\infty}\left(X\subset\mathbb{R}^{n},\mathbb{R}\right)-$function
$V_{0}\left(x\right)$ such that $\forall x\in X,0\leq V\left(x\right)\leq Q\left(x\right),$
perform the following iterations starting with $i:=0$ for all $x_{k}\in X$:

\begin{eqnarray}
	u_{i}\left(x{}_{k}\right): & = & \arg\inf_{u}\left\{ r\left(x_{k},u\right)+V_{i}\left(f\left(x_{k},u\right)\right)\right\} , \label{eq:ADP-VI-ctrl-policy}\\
	V_{i+1}\left(x_{k}\right): & = & r\left(x_{k},u_{i}\left(x_{k}\right)\right)+V_{i}\left(f\left(x_{k},u_{i}\left(x\right)\right)\right),\label{eq:ADP-VI-value-fnc}\\
	i: & = & i+1.\nonumber 
\end{eqnarray}

Al-Tamimi showed that this algorithm converges to the solution of
the Hamilton--Jacobi--Bellman equation

\begin{eqnarray}
	V^{*}\left(x_{k}\right) & = & \inf_{u}\left\{ r\left(x_{k},u\right)+V^{*}\left(f\left(x_{k},u\right)\right)\right\} ,\label{eq:HJB-eqn}\\
	u^{*}\left(x_{k}\right) & = & \arg\inf_{u}\left\{ r\left(x_{k},u\right)+V^{*}\left(f\left(x_{k},u\right)\right)\right\},
	\label{eq:opt-ctrl-policy} \\
	& & \forall x_{k}\in X
\end{eqnarray}
as $i\rightarrow\infty$. The proof essentially uses the classical
monotone convergence theorem that states that a sequence of real numbers
converges whenever it is bounded and monotone. Consequently, no estimate
on the number of iterations can be given for the prescribed accuracy
$\big|\big|V_{i}-V^{*}\big|\big|$. Another subtle point, that is
hard to justify from both the constructive and practical viewpoint,
is the assumption that \eqref{eq:ADP-VI-ctrl-policy} can be solved
in terms of a closed-form expression. That is generally impossible.
An exception is \eg a\emph{ linear quadratic regulator }(LQR)
that is a solution for linear systems. \citet{Liu2014-PI-ADP} introduced
a similar proof technique, as in \citep{Al-Tamimi2008-VI-ADP}, for
a \emph{policy iteration} algorithm: start with $i:=0$, and any continuous
control policy $u_{0}$ such that $u(0)=0$, the state trajectory
$x_{k}\rightarrow0$ under $u_{0}$, and \eqref{eq:cost-to-go} converges,
perform the following iterations:
\begin{eqnarray}
	V_{i}\left(x\right): & = & r\left(x,u_{i}\left(x\right)\right)+V_{i}\left(f\left(x,u_{i}\left(x\right)\right)\right),\label{eq:ADP-PI-value-fnc}\\
	u_{i}\left(x{}_{k}\right): & = & \arg\inf_{u}\left\{ r\left(x_{k},u\right)+V_{i-1}\left(f\left(x_{k},u\right)\right)\right\} ,\label{eq:ADP-PI-ctrl-policy}\\
	& & \forall x_{k}\in X \\
	i: & = & i+1.\nonumber 
\end{eqnarray}

Notice the difference in iteration indices for the value function
and the control policy. Again, the proof of convergence uses the monotone
convergence theorem and the assumption that \eqref{eq:ADP-PI-ctrl-policy}
can be solved in terms of a closed-form expression. To coup with this
problem, Al-Tamimi and Liu suggest to use neural--network based approximators
for the value function and the control policy. Unfortunately, no convergence
proof has been given for such an approximate setting \citep[p. 632]{Liu2014-PI-ADP}.
An alternative approach has been proposed by \citet{Heydari2014-better-prf-ADP}.
Instead of approximating the control policy, Heydari has shown that
the first--order necessary condition for an extremum
\[
u_{i}(x)=-\frac{1}{2}R^{-1}g^{T}(x)\frac{\partial V_{i}\left(f(x)+g(x)u_{i}(x)\right)}{\partial x},\forall x\in X
\]
is a fixed--point equation provided that all the functions in question
are $\mathcal{C}^{\infty}$. By an appropriate choice of the matrix
norm of $R^{-1}$ and/or $g(x)$, it can be shown that the mapping
$F:\mathcal{C}^{\infty}\left(X,\mathbb{R}^{m}\right)\rightarrow\mathcal{C}^{\infty}\left(X,\mathbb{R}^{m}\right)$
defined by
\begin{equation}
	F[u](x):=-\dfrac{1}{2}R^{-1}g^{T}\left(x\right)\frac{\partial V_{i}(f(x)+g(x)u(x))}{\partial x},\label{eq:ctrl-policy-contraction}
\end{equation}

is a contraction. The assumption that the control policy at each iteration
is a smooth function satisfies our argumentation in Section \ref{sub:DP}
and, provided with a uniform bound and Lipschitz constant, leads to
total boundedness of the space of control policies by Corollary \ref{cor:smooth-fncs-tot-bounded}.
However, the first--order condition for an extremum is not sufficient
to claim that the infimum of \eqref{eq:ADP-VI-value-fnc} is attained
at each iteration. Currently, one can decouple \eqref{eq:ADP-VI-ctrl-policy}
and \eqref{eq:ADP-VI-value-fnc}, iterate the value function and then
claim existence of an approximate optimal control policy for an alternative
performance mark \eqref{eq:DP-opt-problem-altern}. Relaxing the continuity
condition by considering measurable functions, and investigating other
performance marks, such as Lebesgue integrals, may be of interest
for future research.


\markboth{P. OSINENKO, S. STREIF}{\rm APPROXIMATE EVT FOR FUNCTIONS SPACES}

\section{Conclusions}

The present work is highlighted in the following points: 
\begin{itemize}
	\item A new constructive proof of the approximate extremum value theorem for function spaces is suggested.
	\item The methodological approach of the proof takes into account the numerical uncertainty which is related to limitations of real number representations in a computational device.
	\item The functions forming finite approximations to the respective function spaces are constructed explicitly. In particular, it was shown that the sets of uniformly bounded and uniformly Lipschitz functions on a totally bounded set are totally bounded by explicit constructions of approximating functions.
	\item As stated in Remark \ref{rem:aEVT-fnc-implication}, any numerical procedure may in general achieve the result of Theorem \ref{thm:aEVT-fnc} at best. That implies that optimality in optimal control problems may in general be achieved only approximately.
	\item Applications of the theorem to finite-horizon optimal control, dynamic programming and adaptive dynamic programming are addressed. It is shown that under the stated assumptions, whose practicability is discussed, approximate optimal control policies can be effectively computed up to prescribed accuracy on the approximate optima of the cost functional.
\end{itemize}

\vspace*{6pt}


\bibliographystyle{newapa}
\bibliography{bib-Osinenko}

\section*{Appendix}

\noindent \emph{Smooth approximations}
\vspace*{6pt}

For the sake of completeness, some technical details
of smooth approximation are discussed in this appendix. First, consider
the following real--valued non--analytic $\mathcal{C}^{\infty}$--function
on $\mathbb{R}$:

\[
\sigma(t)=\left\{ \begin{array}{ll}
\mathrm{e}^{-\nicefrac{1}{t^{2}}} & t>0,\\
0 & t\le0.
\end{array}\right.
\]

Define a $\mathcal{C}^{\infty}$ bump function $\vartheta:\mathbb{R}^{n}\rightarrow\mathbb{R},n\in\mathbb{N}$
by $\vartheta(x):=a\cdot\sigma\left(1-\|x\|^{2}\right)$ where 

\[
a:=\left(\int\limits _{\mathbb{R}^{n}}\sigma\left(1-\|x\|^{2}\right)dx\right)^{-1}.
\]

Then, the support of $\vartheta$ lies within the unit ball $\mathcal{B}(0,1)$,
i. e.\\
$\left\{ x\in\mathbb{R}^{n}:\vartheta(x)>0\right\} \subset\mathcal{B}(0,1)$.
Clearly, $\forall x\in\mathbb{R}^{n},\vartheta(x)\ge0$ and $\int_{\mathbb{R}^{n}}\vartheta(x)dx=1$.
Now, let $\vartheta_{k}(x):=k^{n}\vartheta(kx)$ for $k\in\mathbb{N}$.
It follows that $\int_{\mathbb{R}^{n}}\vartheta_{k}(x)dx=k^{n}\int_{\mathbb{R}^{n}}\frac{1}{k^{n}}\vartheta(kx)d(kx)=1$.
Let $f:\mathbb{R}^{n}\rightarrow\mathbb{R}$ be an $L$--Lipschitz
function. It may be assumed that $L=1$ without loss of generality. Clearly,
$f$ is locally integrable, i. e. integrable on any compact subset
of $\mathbb{R}^{n}$ since it is Lipschitz continuous on this subset.
Since $\vartheta_{k}$ is compactly supported, define a $\mathcal{C}^{\infty}$
function $f_{k}$ by convolution as follows:

\begin{align*}
	f_{k}(x)=\int\limits _{\mathbb{R}^{n}}f(\chi)\vartheta_{k}(x-\chi)d\chi=\int\limits _{\mathbb{R}^{n}}f(x-\chi)\vartheta_{k}(\chi)d\chi=\\
	k^{n}\int\limits _{\mathcal{B}\left(0,\nicefrac{1}{k}\right)}f(x-\chi)\vartheta(k\chi)d\chi=\int\limits _{\mathcal{B}\left(0,1\right)}f\left(x-\dfrac{\chi}{k}\right)\vartheta(\chi)d\chi.
\end{align*}

It follows that

\begin{align*}
	f_{k}(x)-f_{k}(y)=\int\limits _{\mathbb{R}^{n}}\big(f(x-\chi)-f(y-\chi)\big)\vartheta_{k}(\chi)d\chi.
\end{align*}

Since $\forall\chi\in\mathbb{R}^{n},\big|f(x-\chi)-f(y-\chi)\big|\vartheta_{k}(\chi)$$\leq\|x-y\|\vartheta_{k}(\chi)$,
it holds that:

\begin{align*}
	\big|f_{k}(x)-f_{k}(y)\big|\le\int\limits _{\mathbb{R}^{n}}\big|f(x-\chi)-f(y-\chi)\big|\vartheta_{k}(\chi)d\chi\le\\
	\|x-y\|\int\limits _{\mathbb{R}^{n}}\vartheta_{k}(\chi)d\chi=\|x-y\|.
\end{align*}

Finally, since $f(x)=f\left(x\right)\cdot1=f\left(x\right)\cdot\int\limits _{\mathcal{B}(0,1)}\vartheta(\chi)d\chi=$$\int\limits _{\mathcal{B}(0,1)}f\left(x\right)\vartheta(\chi)d\chi$,
it follows that

\begin{align*}
	\big|f_{k}(x)-f(x)\big|=\bigg|\int\limits _{\mathcal{B}(0,1)}f\left(x-\dfrac{\chi}{k}\right)\vartheta(\chi)d\chi-f(x)\bigg|=\\
	\bigg|\int\limits _{\mathcal{B}(0,1)}f\left(x-\dfrac{\chi}{k}\right)\vartheta(\chi)d\chi-\int\limits _{\mathcal{B}(0,1)}f\left(x\right)\vartheta(\chi)d\chi\bigg|=\\
	\bigg|\int\limits _{\mathcal{B}(0,1)}\left(f\left(x-\dfrac{\chi}{k}\right)-f(x)\right)\vartheta(\chi)d\chi\bigg|\leq\\
	\sup_{\chi\in\mathcal{B}(0,1)}\Big|\Big|\dfrac{\chi}{k}\Big|\Big|\int\limits _{\mathcal{B}\left(0,1\right)}f\vartheta(\chi)d\chi=\dfrac{1}{k}.
\end{align*}

\noindent \emph{Brouwerian counter-examples}
\vspace*{6pt}

The two examples in Section \ref{sec:intro}, also called Brouwerian counter-examples, demonstrate the computational inability of finding exact optimal control actions and policies in general. The possible computational problems with addressing optimal control may be related to the so-called principles of omniscience \citep[p.~11]{Bishop1985-constr-analysis}. One of them, called \emph{limited principle of omniscience} (LPO) states: 

\begin{defn}
	(LPO) For any sequence binary $\{a_i\}_i$, the following holds: either $a_i = 0$ for all $i$, or there is a $k$ with $a_k = 1$.
\end{defn}

LPO might be related to the inability of a computer to perform an unbounded search and decide exactly whether a given real number is non-zero or exactly zero which may be in turn related to the Turing's Halting problem \citep{Turing1936-Halting-problem}. Let $\{ a_i \}_i$ be a binary sequence  with at least one $1$ at some place which is not a priori known. Let the numbers $b, c$ be defined as follows:
\[
b = \frac{1}{4} \sum_{i=0}^{\infty} \frac{1}{i+1} a_{2i+1}, c = \frac{1}{4} \sum_{i=1}^{\infty} \frac{1}{i+1} a_{2i}.
\] 

If it were known that $b=0$, then one could deduce that all the odd entries of $\{a_i\}$ are zero and, therefore, since one even entry must be $1$, there exists an index $2N$ such that $a_{2N}=1$, i. e.,
\[
b=0 \; \Rightarrow \; \forall i, \; a_{2i+1} =0 \; \Rightarrow \; \exists N. \; a_{2N}=1.
\]

Similarly, if $c=0$, then some even entry of $\{a_i\}$ must be $1$, i. e.,
\[
c=0 \; \Rightarrow \; \forall i, \; a_{2i} =0 \; \Rightarrow \; \exists N. \; a_{2N+1}=1.
\]

In this minimalistic scenario, the appearance of a $1$ can be described by the condition $b=0 \lor c=0$ which implies LPO.  Consider, for instance, the case of the cost function $J(u) = \min \{ u^2 + b , (u-1)^2+c \}$. If an optimal control action $u^*$ could be computed exactly, such that $J(u^*)= \min J$, then either $u^* \geq \frac{1}{3}$ or $u^* \leq \frac{2}{3}$. If  $u^* \geq \frac{1}{3}$, then
\[
(u^*)^2 +b \geq \frac{1}{9}+b > 0
\]

whence $(u^* -1)^2 +c =0 \Rightarrow c=0$. If $u^* \leq \frac{2}{3}$, then
\[
(u^*-1)^2 +c \geq \frac{1}{9}+c > 0
\]
whence $(u^*)^2 +b =0 \Rightarrow b=0$. Therefore, equivalence to LPO is shown.

\end{document}